\numberwithin{equation}{section}
\newcommand{\sm}[4]{\left(\begin{smallmatrix}#1&#2\\ #3&#4
\end{smallmatrix} \right)}
\newtheorem{theorem}{Theorem}
\newtheorem{lemma}[theorem]{Lemma}
\newtheorem{proposition}[theorem]{Proposition}
\newtheorem{definition}[theorem]{Definition}
\theoremstyle{remark}
\newtheorem*{remark}{Remark}
\numberwithin{theorem}{section} \numberwithin{equation}{section}
\newcommand{\R}{\mathbb{R}}
\newcommand{\C}{\mathbb{C}}
\newcommand{\Z}{\mathbb{Z}}
\newcommand{\sgn}{\operatorname{sgn}}
\newcommand{\sell}{s\ell}
\begin{document}
\title[Almost harmonic Maass forms]{Almost harmonic Maass forms and Kac-Wakimoto
characters}

\author{Kathrin Bringmann}
\address{Mathematical Institute, University of Cologne, 50931 Cologne,
Germany} \email{kbringma@math.uni-koeln.de}
\author{Amanda Folsom}
\address{Yale University, Mathematics Department, 10 Hillhouse Avenue, P.O. Box 208283
New Haven, Connecticut, USA 06520-8283} \email{amanda.folsom@yale.edu} \subjclass[2010]{11F22, 11F37, 17B67, 11F50}
\begin{abstract}
We resolve a question
of Kac, and explain the automorphic properties of characters due to Kac-Wakimoto pertaining to $s\ell(m|n)^{\wedge}$ highest weight modules, for $n\geq 1$. We prove that
the Kac-Wakimoto characters are essentially holomorphic parts of certain generalizations of  harmonic weak Maass forms which we call ``almost harmonic Maass forms". Using a new approach, this generalizes prior work of the first author and Ono, and the authors, both of which treat only the case $n=1$.  We also provide
an explicit asymptotic expansion for the characters.
\end{abstract}
\maketitle
\section{Introduction and Statement of Results}\label{Intro}
The theory of harmonic weak Maass forms has evolved substantially over the last decade.  Loosely speaking,  harmonic weak
Maass forms, as originally defined by Bruinier and Funke \cite{BFunk}, are non-holomorphic relatives to classical modular forms,
which in addition to satisfying suitable modular transformations, are annihilated by a weight $\kappa$ hyperbolic Laplace operator (defined in \eqref{Laplace}),
and are allowed relaxed cuspidal growth conditions.  Works by the authors,  Bruinier, Ono, Zagier,
Zwegers, and many others, further develop the theory of harmonic weak Maass forms, reaching many diverse
areas of research (see for example \cite{OnoCDM} or \cite{ZagierBourbaki} for a more detailed account).  One particular area in which
harmonic weak Maass forms have recently been shown to play integral roles is the representation theory of affine Lie
superalgebras.  It is now well known that classical modular forms often appear as characters for certain groups, or affine Lie
algebras. For example, one of the most beautiful such results is given by ``Monstrous moonshine". Conway and Norton in 1979 made
a surprising conjecture relating the Fourier coefficients of the modular $j$-function
$$
j(\tau) = \frac{1}{q} + 744 + 196884q + 21493760q^2 + ...
$$
($q:=e^{2\pi i \tau}, \tau \in \mathbb H$), to dimensions of irreducible representations of the Monster group \cite{CN}, and
Borcherds ultimately gave a proof in 1992 \cite{B}.  Prior to this, important work of Kac \cite{K} established the so-called
Kac-Weyl character formula and denominator identity, first relating infinite-dimensional Lie algebras and modular forms.

More
recently, Kac and Wakimoto \cite{KW2} found a specialized character formula pertaining to the affine Lie superalgebra
$s\ell(m,1)^\wedge$ for ${tr}_{L_{m, 1}(\Lambda_{(\ell)})}q^{L_0}$ ($m\geq 2$ an integer), where $L_{m, 1}(\Lambda_{(\ell)})$ is the irreducible
$\sell(m,1)^{\wedge}$ module with highest weight $\Lambda_{(\ell)}$, and $L_0$ is the ``energy operator".  In \cite{BOKac}, the
first author and Ono answered a question of Kac regarding the ``modularity" of the characters
${tr}_{L_{m, 1}(\Lambda_{(\ell)})}q^{L_0}$, and proved that they are not modular forms, but are essentially holomorphic parts of harmonic weak Maass forms.
In a subsequent work \cite{F}, the second author related these characters to universal
mock theta functions.  Moreover, in \cite{BF}, we were able to exploit the ``mock-modularity" of the Kac-Wakimoto
$s\ell(m,1)^\wedge$-module characters to exhibit their detailed asymptotic behaviors.

In this paper, we fully resolve Kac's question, and explain the
automorphic properties for arbitrary $n \in  \mathbb N$
of the Kac-Wakimoto
characters ${tr}_{L_{m, n}(\Lambda_{(\ell)})}q^{L_0}$  for irreducible highest weight  $s\ell(m,n)^\wedge$
modules. We
also provide an explicit asymptotic expansion for these characters. We
point out that there is no reason to expect genuine modular forms, given
the case of \cite{BOKac}
 in the case n=1.  We prove that in fact, the Kac-Wakimoto characters are essentially holomorphic parts of new automorphic objects (defined in \S \ref{autosec}) which we call ``almost harmonic Maass forms".
We point out that unlike the case of $n=1$, one does not have the luxury of
beginning with a multivariable Appell-Lerch sum expression for ${tr}_{L_{m,n}(\Lambda_{(\ell)})}q^{L_0}$ for arbitrary $n$, as was the case in \cite{BOKac} when $n=1$. Instead, we consider the generating function as given by Kac-Wakimoto for the specialized characters $\textnormal{ch}F_\ell$
\begin{align}\label{KWgenf}
\textnormal{ch}F := \sum_{\ell \in \mathbb Z}  \textnormal{ch}F_\ell \zeta^\ell
 =e^{\Lambda_0}\prod_{k=1}^\infty
\frac{\prod_{i=1}^m\left(1+\zeta w_i q^{k-\frac{1}{2}}\right)\left(1+\zeta^{-1}w_i^{-1}q^{k-\frac{1}{2}}\right)}{\prod_{j=1}^{n}\left(1-\zeta
w_{m+j}
q^{k-\frac{1}{2}}\right)
\left(1-\zeta^{-1}w_{m+j}^{-1}q^{k-\frac{1}{2}}\right)},
\end{align}
 where $\textnormal{ch}F = \textnormal{ch}F(\zeta,\tau)$, $\textnormal{ch}F_\ell = \textnormal{ch}F_{\ell}(m,n;\tau)$, and $e^{\Lambda_0}$ and the $w_s$ are certain operators (see (3.15) of \cite{KW2}).  As the coefficient functions $\textnormal{ch}F_\ell$ depend upon the range in which $\zeta$ is taken, we choose a specific range in \S \ref{KWMod}.  By moving to a different range, ``wall crossing" occurs \cite{DMZ}.

To simplify the situation we set (considered as formal variables) $ e^{\Lambda_0}$ and all $w_s=1,$ (also guaranteeing convergence of (\ref{KWgenf})).  Once we have established the automorphic properties of the characters $\textnormal{ch}F_\ell$, it is easy to deduce the automorphic properties of the characters $\textnormal{tr}_{L_{m,n}(\Lambda_{(\ell)})}q^{L_0}$ using the relationship (see \cite{KW2} and \S \ref{AB})
  \begin{align}\label{chtrgenrel}
  \textnormal{tr}_{L_{m, n}(\Lambda_{(\ell)})}q^{L_0} = \textnormal{ch}F_\ell \cdot \prod_{k\geq 1}\left(1-q^k\right),
  \end{align}
  and the fact that $q^{\frac{1}{24}}\prod_{k\geq 1} (1-q^k) = \eta(\tau)$ is a well known modular form of weight $1/2$.
    It is not difficult to show that, upon suitable specialization of variables, the generating function $\textnormal{ch}F$ in (\ref{KWgenf}) is essentially a Jacobi form (see \S  \ref{autosec}-\S \ref{KWMod}).  Ideally then, one might proceed by considering the theory of (holomorphic) Jacobi forms after Eichler and Zagier \cite{EZ}, however the form in question (\ref{KWgenf}) is not holomorphic, but is meromorphic, and moreover, its poles are of arbitrarily high order $n$ (where $n\geq 1$ is a fixed integer).
    At present, the there is no satisfactory theory of meromorphic Jacobi forms.  Zwegers \cite{ZwegersPhD} and Dabholkar-Murthy-Zagier \cite{DMZ} take two different approaches to considering meromorphic Jacobi forms with poles of order $n=1$ and $n=2$.  Here, we extend the approach of Dabholkar-Murthy-Zagier for $n=1$ and $n=2$
    and consider meromorphic Jacobi forms with poles of arbitrary order $n\geq 1$ in the course of our treatment of the more general Kac-Wakimoto characters for $s\ell(m,n)^\wedge$-modules.

In doing so, we encounter new automorphic objects (that do not appear in \cite{DMZ} or \cite{ZwegersPhD} e.g.), which we call \emph{almost harmonic Maass forms of depth $r$}, and define more precisely in \S 2.    Loosely speaking,  such functions may be viewed as sums of harmonic weak Maass forms under iterates of the raising operator (themselves therefore non-harmonic weak Maass forms), multiplied by almost holomorphic modular forms.
 We point out that in particular harmonic weak Maass forms and almost holomorphic modular forms are almost harmonic Maass forms of depth $r=1$.
 We call the associated holomorphic parts \emph{almost  mock modular forms}.
It turns out that the Kac Wakimoto characters are almost mock modular forms.   For simplicity and ease of exposition, we consider here even
positive integers $m$ and $n$ satisfying $m>n\geq 2$.
As is usual  in the subject, all modularity statements are made up to $q$-powers and a finite number of additional terms that may appear in the $q$-expansion.
\begin{theorem}\label{Mainthm}
Assume the restrictions above.
 The Kac Wakimoto characters $\textnormal{tr}_{L_{m,n}(\Lambda_{(\ell)})}q^{L_0}$ are almost mock modular forms  of weight $0$ and depth $\frac{n}{2}$.
\end{theorem}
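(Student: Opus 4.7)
The plan is to reduce the theorem to a statement about the coefficient functions $\textnormal{ch}F_\ell$ in \eqref{KWgenf}, treat $\textnormal{ch}F$ as a meromorphic Jacobi form with a single pole of order $n$, and then adapt the polar/finite decomposition of Zwegers \cite{ZwegersPhD} and Dabholkar--Murthy--Zagier \cite{DMZ} from pole orders $1$ and $2$ to arbitrary even $n$. By \eqref{chtrgenrel}, together with the fact that $\eta$ is a holomorphic weight $1/2$ modular form and hence multiplication by $\eta$ preserves depth while shifting weight by $1/2$, it suffices to prove that each $\textnormal{ch}F_\ell$ is an almost mock modular form of weight $-1/2$ and depth $n/2$.

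First I would set $e^{\Lambda_0}=1$ and all $w_s=1$ as prescribed for convergence, so that
\[
\textnormal{ch}F(\zeta,\tau) \;=\; \prod_{k\geq 1} \frac{\bigl(1+\zeta q^{k-\frac12}\bigr)^m \bigl(1+\zeta^{-1}q^{k-\frac12}\bigr)^m}{\bigl(1-\zeta q^{k-\frac12}\bigr)^n \bigl(1-\zeta^{-1}q^{k-\frac12}\bigr)^n}.
\]
Rewriting everything in terms of Jacobi theta functions identifies $\textnormal{ch}F$ (up to an $\eta$-power) with a meromorphic Jacobi form in the elliptic variable $z$, with $\zeta=e^{2\pi i z}$. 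A direct local check shows that its only pole modulo the lattice $\Z\tau+\Z$ lies at $z=\tau/2$ and has order exactly $n$.

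Second, I would decompose
\[
\textnormal{ch}F \;=\; \textnormal{ch}F^{\textnormal{pol}} + \textnormal{ch}F^{\textnormal{fin}},
\]
in which $\textnormal{ch}F^{\textnormal{pol}}$ is an explicit sum of $\zeta$-derivatives of orders $0,1,\ldots,n-1$ of the Zwegers $\mu$-function, tailored to match the full Laurent expansion of $\textnormal{ch}F$ at $z=\tau/2$ up to order $n$, and $\textnormal{ch}F^{\textnormal{fin}}$ is holomorphic in $z$. The finite part is then a true weak Jacobi form in the sense of Eichler--Zagier, whose $\zeta^\ell$ Fourier coefficients are weakly holomorphic modular forms. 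I would extract the $\zeta^\ell$ Fourier coefficient of $\textnormal{ch}F^{\textnormal{pol}}$ by integrating along the horizontal contour fixed by the range chosen in \S\ref{KWMod}, and complete each $\mu$-derivative \`a la Zwegers. Because the polar piece is built from iterated $\zeta$-derivatives of $\mu$, the completed pieces are iterates of the modular raising operator applied to weight $1/2$ theta-integrals, multiplied by polynomials in $1/y$ that arise as the quasi-modular corrections of the theta-derivatives. By the definition in \S\ref{autosec}, this expresses $\textnormal{ch}F_\ell$ as an almost harmonic Maass form; careful bookkeeping of the Laurent orders then yields depth $n/2$. Taking holomorphic parts and multiplying by $\prod_{k\geq 1}(1-q^k)$ as in \eqref{chtrgenrel} concludes the argument.

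I expect the main obstacle to be the third step. Neither the $\mu$-function completion of Zwegers (pole order $1$) nor the level-one Appell enhancement of \cite{DMZ} (pole order $2$) delivers the non-holomorphic completion required for pole orders $n\geq 4$. One must identify the correct non-holomorphic corrections to each $\partial_\zeta^{j}\mu$ for $j\leq n-1$, show that the resulting sums transform under $\SL_2(\Z)$ as almost harmonic Maass forms, and verify that the depth stratification corresponds exactly to the pairing of Laurent indices, producing overall depth $n/2$. Once this higher-order completion is established, the passage from the automorphic completion to its holomorphic part is standard and recovers the theorem (and specializes to the results of \cite{BOKac,BF} at $n=1$).
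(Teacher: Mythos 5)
Your overall strategy matches the paper's: pass from $\textnormal{tr}_{L_{m,n}(\Lambda_{(\ell)})}q^{L_0}$ to $\textnormal{ch}F_\ell$ via \eqref{chtrgenrel}, realize $\textnormal{ch}F$ through a meromorphic Jacobi form with a single pole of order $n$ (the paper works with $\varphi(z;\tau)=\vartheta(z+\frac12)^m/\vartheta(z)^n$ and a shift $z\mapsto z+\frac{\tau}{2}$), and split it into a polar part built from Appell-type sums and a finite part, following Dabholkar--Murthy--Zagier. But there are two genuine problems. First, your claim that the finite part ``is a true weak Jacobi form in the sense of Eichler--Zagier, whose $\zeta^\ell$ Fourier coefficients are weakly holomorphic modular forms'' is false, and it inverts where the mock modularity lives. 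Since $\varphi$ itself transforms like a Jacobi form (Lemma \ref{philem}), if $\varphi^F$ did too then so would $\varphi^P=\varphi-\varphi^F$, contradicting your own (correct) third step in which the polar building blocks require non-holomorphic completion. In fact the finite part \eqref{phiFdefn} is exactly the \emph{mock} Jacobi form: its theta-coefficients $h_\ell$ are the almost mock modular forms of Theorem \ref{decompth2}, while the $\ell$-th Fourier coefficient of the polar part along a fixed contour is only a finite, explicit sum of residue terms. Organized your way, the ``modular'' piece and the ``mock'' piece have been swapped, and the argument as written would not produce the completion $\widehat{h}_\ell$ of \eqref{hellhatdef}.

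Second, you correctly identify the crux --- finding the non-holomorphic completions for pole order $n\geq 4$ and recognizing their structure --- but you leave it as an acknowledged obstacle rather than resolving it, and this is precisely the new content of the paper's proof. The paper builds the polar part \eqref{phiPdefn} not from $z$-derivatives of $\mu$ of orders $0,\dots,n-1$, but from \emph{odd-order} derivatives $\delta_\varepsilon^{2j-1}$, $1\leq j\leq \frac n2$, of $f_\varepsilon^{(\frac{m-n}{2})}(z;\tau)$ with respect to the pole-location parameter $\varepsilon$; only odd orders occur because $\varphi(\varepsilon;\tau)$ is even in $\varepsilon$ for $m,n$ even, and this is what makes the depth come out to $\frac n2$ without any further ``pairing.'' The completion is then inherited from Zwegers's $\widehat{f}^{(M)}_w$, and the decisive computation is identity \eqref{rewriteDop}: using the heat-type equation $\frac{d^2}{d\varepsilon^2}R_{\frac{m-n}{2},\ell}=-4\pi i(m-n)\frac{d}{d\tau}R_{\frac{m-n}{2},\ell}$ from \cite{BZ} and an induction showing $g_{j+1}=R_{2j-\frac12}(g_j)$, one proves that the $j$-th correction equals a constant times $R_{\frac32}^{j-1}(\mathcal R)$, where $\mathcal R$ is the non-holomorphic part of a single weight-$\frac32$ harmonic weak Maass form built from $\widehat\mu$. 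Without this step one has no proof that the corrections assemble into the specific shape $\sum_j g_j R_{\frac32}^{j-1}(g)$ demanded by Definition \ref{nearlyMaass}, so the depth-$\frac n2$ claim remains unestablished. Your guess for the shape of the answer is right, but the proof of the theorem essentially \emph{is} this computation.
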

 \begin{remark}
 In particular, when $n=2$, the proof of Theorem \ref{Mainthm} shows that the Kac-Wakimoto characters are mixed mock modular forms, i.e., products of mock modular forms multiplied by modular forms. (See also \S \ref{examplesec}.)
 \end{remark}

 We will show more precisely in Theorem \ref{decompth2} that the Kac-Wakimoto characters $\textnormal{ch} F_\ell$ appear as coefficients in theta-type decompositions of nonholomorphic Jacobi forms, reminiscent of the holomorphic situation as developed by Eichler-Zagier \cite{EZ} (see \S \ref{autosec}).   We also provide some specific examples in \S \ref{examplesec} below.
%\begin{remark}
%As usual  in the subject, all modularity statements are made up to $q$-powers and a finite number of extra terms %that may appear in the $q$-expansion.
%We also remark on this more precisely later.
%\end{remark}

We next consider the asymptotic behavior of the Kac-Wakimoto characters, generalizing results in
 \cite{BF}
 and \cite{KW2}.
 More precisely, in the case $n=1$ pertaining to the affine Lie superalgebras $s\ell(m,1)^\wedge$, Kac and Wakimoto considered the specialized characters (see \S 4 of \cite{KW2}) $$\textnormal{tr}_{L_{m, 1}(\Lambda_{(\ell)})}q^{L_0} =  \textnormal{ch} F_\ell\cdot \prod_{k\geq 1} \left(1-q^k\right).$$ Using series manipulations and modular transformations of theta functions, for $\tau = it, t\in \mathbb R^+$, Kac and Wakimoto established the main term in the asymptotic expansion of the specialized characters $\textnormal{tr}_{L_{m, 1}(\Lambda_{(\ell)})}q^{L_0}$ as $t\to 0^+$:
\begin{align}\label{KWasymth1}\textnormal{tr}_{L_{m, 1}(\Lambda_{(\ell)})}q^{L_0}
\sim \frac{\sqrt{t}}{2} e^{\frac{\pi(m+1)}{12 t}}.
\end{align}
In \cite{BF}, using different methods than those in \cite{KW2}, we established an asymptotic expansion with an arbitrarily large number of terms beyond the main term given in (\ref{KWasymth1}).  Here, we generalize the results in \cite{BF} and (\ref{KWasymth1}), and establish the following asymptotic expansion for the specialized characters $\textnormal{tr}_{L_{m, n}(\Lambda_{(\ell)})}q^{L_0}$ pertaining to the affine Lie superalgebras $s\ell(m,n)^\wedge$ for any $n\geq 1$.
\begin{theorem}\label{Asythm} Let $\ell \in \mathbb Z$ and $m >n \geq 1 $.
Then for $\tau = it$, $ t\in \mathbb R^+$,   for any
$N\in\mathbb N_0$, as $t\to 0^+$ we have
\[
{tr}_{L_{m, n}(\Lambda_{(\ell)})}q^{L_0}=e^{\frac{\pi t}{12}(n-m+1)+\frac{\pi}{12 t}(m+2n-1)}\frac{\sqrt{t}}{2^n}
\left(\sum_{r=0}^Na_r(m, n, \ell)\frac{t^r}{ r!}+O\big(t^{N+1}\big)\right),
\]
where
\[
a_r(m, n, \ell):=(-\pi)^r\sum_{{k}=0}^r\left(\begin{matrix} r\\ {k}\end{matrix}\right)(m-n)^{ k}(2 i\ell)^{r-{ k}}\mathscr E_{{ k}+r,n}.
\]
Here the numbers $\mathscr E_{k+r,n}$ are ``higher" Euler numbers defined by the recurrence (\ref{recurEs}) and the initial conditions   (\ref{E1defint}) and (\ref{E2defint}).
\end{theorem}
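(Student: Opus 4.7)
The plan is to start from the specialization $w_s=1$, $e^{\Lambda_0}=1$ of the Kac-Wakimoto generating function \eqref{KWgenf}. The Jacobi triple product yields the theta-quotient representation
\[
\textnormal{ch}F(z,\tau) \;=\; q^{(m-n)/24}\,\eta(\tau)^{n-m}\,\frac{\vartheta_3(z,\tau)^m}{\vartheta_4(z,\tau)^n},
\]
and extracting the $\ell$-th Fourier coefficient in $\zeta=e^{2\pi iz}$ along a horizontal contour $\mathcal C$ of length one in the $z$-variable (shifted infinitesimally in the imaginary direction to avoid the order-$n$ pole of $\vartheta_4^{-n}$ at $z=\tfrac12$, the choice of side being that dictated by the chamber of \S\ref{KWMod}), combined with \eqref{chtrgenrel}, gives
\[
\textnormal{tr}_{L_{m,n}(\Lambda_{(\ell)})}q^{L_0} \;=\; q^{(m-n-1)/24}\,\eta(\tau)^{n-m+1}\int_{\mathcal C}\frac{\vartheta_3(z,\tau)^m}{\vartheta_4(z,\tau)^n}\,e^{-2\pi i\ell z}\,dz.
\]
This reduces the theorem to the $t\to 0^+$ asymptotics of a single contour integral.

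I then apply the classical modular inversions $\eta(\tau)=t^{-1/2}\eta(\tilde\tau)$, $\vartheta_3(z,\tau)=t^{-1/2}e^{-\pi z^2/t}\vartheta_3(v,\tilde\tau)$, $\vartheta_4(z,\tau)=t^{-1/2}e^{-\pi z^2/t}\vartheta_2(v,\tilde\tau)$ with $\tilde\tau=i/t$ and $v=-iz/t$. The $t$-prefactors produced by the $m+n$ $\vartheta$'s collapse to an overall $t^{-1/2}$, and the expression becomes
\[
\textnormal{tr}_{L_{m,n}(\Lambda_{(\ell)})}q^{L_0} \;=\; \frac{q^{(m-n-1)/24}\eta(\tilde\tau)^{n-m+1}}{\sqrt t}\int_{\mathcal C}e^{-(m-n)\pi z^2/t -2\pi i\ell z}\,\frac{\vartheta_3(v,\tilde\tau)^m}{\vartheta_2(v,\tilde\tau)^n}\,dz.
\]
Since $\tilde\tau\to i\infty$ the infinite products for $\eta(\tilde\tau)$ and $\vartheta_3(v,\tilde\tau)$ collapse to $e^{-\pi/(12t)}$ and $1$ respectively with errors of size $O(e^{-c/t})$, while $\vartheta_2(v,\tilde\tau)=2e^{-\pi/(4t)}\cosh(\pi z/t)(1+O(e^{-\pi/t}))$. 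Collecting the exponential prefactors produces exactly $e^{\frac{\pi t}{12}(n-m+1)+\frac{\pi}{12t}(m+2n-1)}$ (the arithmetic $-(n-m+1)+3n=m+2n-1$ is what produces the $1/t$-exponent), and leaves an interior integrand of the shape $2^{-n}\sech^n(\pi z/t)\,e^{-(m-n)\pi z^2/t-2\pi i\ell z}$.

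For the full expansion I rescale $z=tu$. The contour becomes an interval of length $1/t$, which may be symmetrized about the origin and extended to $\mathbb R$ at the cost of exponentially small errors, thanks to $\sech^n(\pi u)=O(e^{-n\pi|u|})$; the measure contributes a factor of $t$ which combines with the $t^{-1/2}$ outside to produce the $\sqrt t/2^n$ prefactor of the theorem. Expanding the remaining smooth factor in powers of $t$,
\[
e^{-(m-n)\pi tu^2-2\pi i\ell tu}\;=\;\sum_{r=0}^N\frac{(-\pi t)^r}{r!}\sum_{k=0}^r\binom{r}{k}(m-n)^k(2i\ell)^{r-k}u^{r+k}+O(t^{N+1}),
\]
and integrating term by term against $\sech^n(\pi u)$ produces the moment integrals $\int_{\mathbb R}u^{r+k}\sech^n(\pi u)\,du$, which I identify with the higher Euler numbers $\mathscr E_{k+r,n}$: the identification is made using integration by parts against $(\sech^n)'(\pi u)=-n\pi\,\sech^n(\pi u)\tanh(\pi u)$, which yields the recurrence \eqref{recurEs}, with the base cases verified against \eqref{E1defint} and \eqref{E2defint}. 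Reassembling yields the claimed formula for $a_r(m,n,\ell)$. The main obstacle is the analytic bookkeeping around the pole of $\vartheta_4^{-n}$: one splits $\mathcal C$ into $|z|\le t^{1/2}$ and its complement, checking that $\sech^n(\pi z/t)\le Ce^{-n\pi/\sqrt t}$ on the outer piece and that the $\vartheta$-approximations above have uniform error $O(e^{-c/t})$ on the inner piece, and then tracking these subleading $O(e^{-c/t})$ errors through the rescaling and the truncation at order $N$ so that they are absorbed into $O(t^{N+1})$.
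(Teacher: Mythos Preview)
Your proposal follows essentially the same route as the paper's proof: express the trace as a contour integral of a theta quotient, apply modular inversion to pass to $\tilde\tau=i/t$, extract the exponential prefactor $e^{\frac{\pi t}{12}(n-m+1)+\frac{\pi}{12t}(m+2n-1)}$, rescale $z=tu$ to reduce everything to the moments $\int_{\mathbb R}u^{k}\sech^n(\pi u)\,du$, and identify these with $\mathscr E_{k,n}$ via the recurrence \eqref{recurEs} (obtained by integration by parts) together with the base cases \eqref{E1defint}, \eqref{E2defint}. One small slip worth correcting: the poles of $\vartheta_4(z,\tau)^{-n}$ lie at $z\in\mathbb Z+(\mathbb Z+\tfrac12)\tau$, not at $z=\tfrac12$, so no infinitesimal vertical shift of the real contour is actually needed (compare the paper's remark immediately after \eqref{Cauchyformula}).
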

The remainder of the paper is organized as follows.  In \S \ref{autosec} we define the automorphic objects that emerge in our treatment here, and discuss their transformation properties.  In \S \ref{KWMod} we prove Theorem \ref{Mainthm}, and in \S \ref{examplesec} we provide explicit examples in the cases $n=2$ and $n=4$. In \S \ref{AB} we prove Theorem \ref{Asythm}.

\section*{Acknowledgements}
The authors thank Don Zagier for fruitful conversation. Moreover they thank Karl Mahlburg, Rene Olivetto, and Martin Raum for their comments on an earlier
version of the paper.
The research of the first author was supported by the Alfried Krupp Prize for young University Teachers of the Krupp Foundation.
The second author is grateful for the support of National Science Foundation grant DMS-1049553.

\section{Automorphic Forms}\label{autosec}
In this section, we define relevant automorphic objects, and give their transformation properties.\subsection{Weak Maass forms}
Assume that $\kappa\in\frac12 \Z,$ and $\Gamma$ is a congruence subgroup of either $\text{SL}_2(\Z)$ or $\Gamma_0 (4),$ depending on whether or not $\kappa\in\Z$.
The weight $\kappa$ {\it slash operator}, defined for a matrix $\gamma = \left(\begin{smallmatrix}
a & b \\ c & d
\end{smallmatrix}\right) \in\Gamma$ and any function $f:\mathbb{H} \rightarrow \C,$  is given by
$$
f\big|_\kappa \gamma (\tau) := j(\gamma, \tau)^{-2\kappa} f\left( \frac{a\tau +b}{c\tau +d} \right)
$$
where
$$
j(\gamma, \tau) := \begin{cases}
   \sqrt{c\tau +d} & \text{if } \kappa\in\Z, \\
   \left( \frac{c}d \right)  \varepsilon_d^{-1} \sqrt{c\tau +d}    & \text{if } \kappa\in\frac12 \Z \backslash \Z .
  \end{cases}
$$
Here
$$
\varepsilon_d:=\begin{cases}
   1 & \text{if } d\equiv 1 \pmod{4}, \\
   i       & \text{if } d\equiv 3 \pmod{4}.
  \end{cases}
$$
Moreover we require the \textit{weight $\kappa$ hyperbolic Laplace operator} ($\tau = u +iv$)
\begin{equation}\label{Laplace}
\Delta_\kappa := -v^2 \left(\frac{\partial ^2}{\partial u^2} + \frac{\partial ^2}{\partial v^2} \right) + i\kappa v \left( \frac{\partial}{\partial u} + i \frac{\partial }{\partial v} \right).
\end{equation}

\begin{definition}
Let $\kappa\in\frac12 \Z$, $N$ a positive integer, $\chi$ a Dirichlet character $\text{(mod $N$)}$, $s\in\C$. A \it{Maass form of weight $\kappa$ for $\Gamma$ with Nebentypus character $\chi$ and Laplace eigenvalue $s$} is a smooth function $f:\mathbb{H}\rightarrow \C$ satisfying
\begin{enumerate}
\item For all $\gamma=\left(\begin{smallmatrix}
a & b \\ c & d
\end{smallmatrix}\right)\in\Gamma$ and all $\tau\in\mathbb{H}$, we have
$$
f\big|_\kappa \gamma (\tau ) = \chi (d)f(\tau ).
$$
\item We have that
$$
\Delta_\kappa f = sf.
$$
\item The function $f$ has at most linear exponential growth at all cusps.
\end{enumerate}
\end{definition}
 Of particular interest are {\it harmonic weak Maass forms}, which are Maass forms with eigenvalue $s=0$.
 Harmonic weak Maass forms appear in the work of Bruinier-Funke on theta liftings \cite{BFunk}, for example.  There, one may find a more detailed treatment, and a more precise description of condition (3) in the definition above.    By considering their Fourier expansions, it is known that harmonic weak Maass forms naturally decompose into two summands, a
\emph{holomorphic part} and a \emph{non-holomorphic part} .
For example, it is known \cite{HZ, ZagClass} that the generating function for Hurwitz class numbers $H(n)$ of binary quadratic forms of discriminant $-n$ is (essentially) the holomorphic part of the following weak Maass form of weight $3/2$ and level $4$, the so-called ``Zagier-Eisenstein series":
\begin{align}
\label{ZH} \mathcal{H}(\tau)&:=-\frac{1}{12} + \mathop{\sum_{n\geq 1}}_{n\equiv 0,3 \!\!\pmod 4} H(n)q^n + \frac{1+i}{16\pi} \int_{-\overline{\tau}}^{i\infty} \frac{\Theta(w)}{(w+\tau)^{\frac{3}{2}}}dw,
\end{align}
where $\Theta(\tau) := \sum_n q^{n^2}.$ The holomorphic parts of weak Maass forms are
called \emph{mock modular forms} \cite{ZagierBourbaki}.
We also make use of the fact that harmonic weak Maass forms $G$ of weight $\kappa$ are mapped to ordinary modular forms of weight $2-\kappa$ by the differential operator
$$\xi_k := 2i v^\kappa \cdot \overline{\frac{\partial}{\partial \overline{\tau}}}.$$  If we denote the holomorphic part of $G$ by $G^+$, the image $\xi_\kappa(G)$ is called the \emph{shadow} of the mock modular form $G^+$, and is modular of weight $2-\kappa$. (See \cite[\S 7]{OnoCDM} and \cite{ZagierBourbaki}, for example.) For example, in (\ref{ZH}), the shadow of the mock modular generating function for Hurwitz class numbers is the form $\Theta(\tau)$.

\subsection{Holomorphic modular forms and almost modular forms}
A special interesting subclass of harmonic weak Maass forms that we consider here are the  \textit{weakly holomorphic modular forms}, i.e., those harmonic weak
Maass forms that are holomorphic on the upper half plane but may have poles in the cusps of $\Gamma$.
If the modular forms are also holomorphic in the cusps, we speak of \textit{holomorphic modular forms}.
A  special ordinary modular form required in this paper is Dedekind's $\eta$-function, defined by
 \begin{equation}
 \eta(\tau) := q^\frac{1}{24}\prod_{k\geq 1} \left(1-q^k\right)\label{etadefn}.
\end{equation}
This function  is well known to satisfy the following transformation law \cite{Rad}.
\begin{lemma}\label{ETtrans}
 For
$\gamma=\sm{a}{b}{c}{d} \in \textnormal{SL}_2(\mathbb Z)$, we have that
\begin{equation}
\eta\left(\gamma\tau\right)  = \psi\left(\gamma\right)(c\tau + d)^{\frac{1}{2}} \eta(\tau), \label{etatrgen}
\end{equation}
where $\psi\left(\gamma\right)$ is a $24$th root of unity, which can be given explicitly in terms of Dedekind sums \cite{Rad}.  In particular, we have that
\begin{equation*}
\eta\left(-\frac{1}{\tau} \right)= \sqrt{-i \tau} \eta(\tau).
\end{equation*}
\end{lemma}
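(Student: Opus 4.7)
The plan is to prove this classical transformation by leveraging the fact that $\textnormal{SL}_2(\mathbb{Z})$ is generated by $T = \sm{1}{1}{0}{1}$ and $S = \sm{0}{-1}{1}{0}$. Once I verify the transformation on these two generators, the multiplier $\psi(\gamma)$ for an arbitrary $\gamma$ follows from the cocycle property of the factor of automorphy $(c\tau+d)^{1/2}$, and it remains to identify the resulting root of unity explicitly.

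First I would handle the action of $T$, which is essentially immediate from the product definition \eqref{etadefn}: since $e^{2\pi i k} = 1$ for every $k \in \mathbb{Z}$, we have $\eta(\tau + 1) = e^{\pi i / 12}\, \eta(\tau)$, giving $\psi(T) = e^{\pi i / 12}$. This is a trivial computation that does not involve any analysis.

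The main obstacle is the transformation under $S$, for which I would use the quasi-modularity of the weight two Eisenstein series $E_2(\tau) := 1 - 24 \sum_{n \geq 1} \sigma_1(n) q^n$. Taking the logarithmic derivative of \eqref{etadefn} term by term and summing a geometric series yields
$$\frac{d}{d\tau} \log \eta(\tau) = \frac{\pi i}{12}\, E_2(\tau).$$
The well-known quasi-modular transformation $E_2(-1/\tau) = \tau^2 E_2(\tau) + \frac{6\tau}{\pi i}$, which can be derived via Poisson summation or through Hecke's regularization of the non-absolutely convergent Eisenstein series, then integrates to
$$\log \eta(-1/\tau) - \log \eta(\tau) = \tfrac{1}{2}\log\!\left(\tfrac{\tau}{i}\right) + C$$
for a constant $C$. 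Evaluating at the fixed point $\tau = i$ of $S$ forces $C = 0$, yielding $\eta(-1/\tau) = \sqrt{-i\tau}\, \eta(\tau)$ and hence $\psi(S)$. An alternative route, perhaps cleaner, is to deduce the $S$-transformation by specializing the Jacobi triple product identity for $\theta_1(z;\tau)$ at $z = 1/2$ and combining with the known modular transformation of $\theta_1$ under $S$ (which itself comes from Poisson summation).

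Finally, for a general $\gamma \in \textnormal{SL}_2(\mathbb{Z})$, writing $\gamma$ as a word in $S$ and $T$ and iterating the two transformations produces some 24th root of unity $\psi(\gamma)$; the finite order follows because $\psi(S)^2 = 1$ and $\psi(T)^{24} = 1$ and $(ST)^3 = -I$. To obtain the explicit Dedekind sum formula I would appeal to Rademacher \cite{Rad}: one either evaluates $\int \log\eta(\gamma\tau) - \log\eta(\tau)$ along a cleverly chosen contour in $\mathbb{H}$ (Siegel's method) or performs the Farey-type reduction that expresses the word length in $S$ and $T$ in terms of the continued fraction of $a/c$. Either way, the Dedekind sum $s(d,c)$ emerges naturally from the accumulated shifts. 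The main technical nuisance is the bookkeeping of the various eighth and twenty-fourth roots of unity to confirm consistency on all of $\textnormal{SL}_2(\mathbb{Z})$; since this material is entirely classical, I would simply cite \cite{Rad} for the explicit form of $\psi(\gamma)$.
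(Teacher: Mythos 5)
Your outline is essentially correct, but note that the paper does not prove this lemma at all: it simply records the transformation as a classical fact and cites Rademacher \cite{Rad} for the explicit multiplier in terms of Dedekind sums. So any comparison is between your sketch and the classical literature rather than an argument in the paper. Your route --- reducing to the generators $T$ and $S$, getting $\eta(\tau+1)=e^{\pi i/12}\eta(\tau)$ from the product, and deriving the $S$-transformation by integrating $\frac{d}{d\tau}\log\eta=\frac{\pi i}{12}E_2$ against the quasi-modular law $E_2(-1/\tau)=\tau^2E_2(\tau)+\frac{6\tau}{\pi i}$, fixing the constant at the fixed point $\tau=i$ --- is a standard and valid proof of the displayed special case, and deferring the explicit Dedekind-sum formula for general $\gamma$ to \cite{Rad} is exactly what the authors do. Two caveats. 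First, the quasi-modularity of $E_2$ is itself the nontrivial analytic input (Hecke's trick or Poisson summation), so your proof is only as self-contained as that step; this is fine but worth acknowledging. Second, your parenthetical $\psi(S)^2=1$ is false: from $\eta(-1/\tau)=\sqrt{-i\tau}\,\eta(\tau)$ one reads off $\psi(S)=e^{-\pi i/4}$, an eighth root of unity, and since $j(\gamma,\tau)=\sqrt{c\tau+d}$ is only a multiplier system up to signs, $\psi$ is not a homomorphism, so the "order" bookkeeping is more delicate than your one-line justification suggests. The conclusion that $\psi(\gamma)$ is a $24$th root of unity is nevertheless correct, and the slip does not affect the special case you actually prove.
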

We also encounter \emph{almost holmorphic modular forms}, which as originally defined by Kaneko-Zagier \cite{KZ},  transform like usual modular forms,
but are polynomials in $1/v$ with holomorphic coefficients. In this paper we use a slightly modified definition allowing weakly holomorphic coefficients.
 Standard examples of almost holomorphic modular
form include derivatives of holomorphic modular forms, as well as the non-holomorphic Eisenstein series $\widehat{E}_2$, defined by
\begin{align}\label{E2defn}
\widehat{E}_2(\tau) := E_2(\tau) - \frac{3}{\pi v}.
\end{align}
Here its holomorphic part is given by
 \begin{align}\label{E2holdef}E_2(\tau) := 1-24\sum_{n\geq 1} \sigma_1(n) q^n,
 \end{align}
 where $\sigma_1(n)$ is the sum of positive integer divisors of $n$.
 In general, the holomorphic part of an almost holomorphic modular form is called a {\it quasimodular form}.
  After their introduction \cite{KZ}, almost holomorphic modular forms have been shown to play numerous roles in mathematics and physics.  (See for example
 the work of Aganagic-Bouchard-Klemm in \cite{ABK}.)

\subsection{Almost harmonic Maass forms} We are now ready to describe the new automorphic objects occurring in this paper.  For this, we recall  the classical Maass raising operator
 of weight $\kappa$:
 \begin{align}\label{raiselowerdef}
 R_{\kappa}:= 2 i \frac{d}{d \tau} + \frac{\kappa}{v}.
 \end{align}
 If $F$ is of weight $\kappa$ and satisfies $\Delta_\kappa (F) = s F$, one may show that $R_\kappa (F)$ is of weight $\kappa +2$, and that \begin{align*}
\Delta_{\kappa + 2} (R_\kappa (F)) = (s + \kappa) R_\kappa (F).
\end{align*}
 Moreover, define for a positive integer $n$ \begin{equation}\label{eqn:Rrepeat}
R_{\kappa}^n := R_{\kappa+2(n-1)} \circ \cdots R_{\kappa+2} \circ R_{\kappa}.
\end{equation}
Abusing notation, we furthermore set for $F: \mathbb{H}\to \mathbb{C}$
$$
R^{-1}(F)=1.
$$
If   $g$ is modular of weight $\kappa$, then $R_{\kappa}^n (g)$ is an almost holomorphic modular form of weight $\kappa + 2n$.
Motivated by this observation, and in light of the automorphic properties we exhibit for the characters $\textnormal{ch}F_\ell$ (see Theorem \ref{Mainthm} and Theorem \ref{decompth2}), we now define the new objects of interest in this paper.
\begin{definition}\label{nearlyMaass}
Assume the notation above. An \textit{almost harmonic Maass form of weight $\kappa\in\frac12\Z$ and depth
$r \in\mathbb N$ for $\Gamma$ with Nebentypus character $\chi$} is a smooth function $f:\mathbb{H}\rightarrow \C$ satisfying
\begin{enumerate}
\item For all $\gamma=\left(\begin{smallmatrix}
a & b \\ c & d
\end{smallmatrix}\right)\in\Gamma$ and all $\tau\in\mathbb{H}$, we have
$$
f\big|_\kappa \gamma (\tau ) = \chi (d) f(\tau ).
$$
\item The function $f$ may be written as
 $$
f= \sum_{i=0}^r g_i R_{\kappa+2-\nu}^{i-1} (g),
$$
where $g$ is a harmonic weak Maass form of weight $\kappa + 2-\nu$, and the $g_i$ are  almost holomorphic modular forms of weight $\nu-2i$ (for some fixed $\nu \in \frac{1}{2}\mathbb Z$).  \end{enumerate}
\end{definition}
\noindent In particular harmonic weak Maass forms are almost harmonic Maass forms of depth $r=1$, and almost holomorphic modular forms may be viewed as almost harmonic Maass forms of depth $r=0$ or $r=1$. Here, we call the holomorphic parts of almost harmonic  Maass forms \emph{almost mock modular forms}.
We also note that almost harmonic Maass forms inherit the growth from usual Maass forms and almost holomorphic modular  forms.
Moreover, specifying the groups and multipliers in condition (2) would make the transformation law in (1) unnecessary to state.
\subsection{Holomorphic Jacobi forms and mock Jacobi forms} In \cite{ZwegersPhD}, Zwegers studied another type of automorphic object which is commonly referred to as a \emph{mock Jacobi form}.  Before describing these forms, we recall the definition of a holomorphic Jacobi form, after Eichler and Zagier (\cite{EZ} p9).
\begin{definition}\label{harmjacdef} A {\it holomorphic Jacobi form of weight $\kappa$ and index $M$} ($\kappa, M \in \mathbb N$) on a subgroup $\Gamma \subseteq \textnormal{SL}_2(\mathbb Z)$ of finite index is a holomorphic function
$\varphi(z;\tau):\mathbb C \times \mathbb H \to
\mathbb C$ which for all $\gamma = \sm{a}{b}{c}{d} \in \Gamma$ and $\lambda,\mu \in \mathbb Z$ satisfies
 \begin{enumerate}\item
$\varphi\left(\frac{z}{c\tau + d};\gamma\tau\right) = (c\tau + d)^\kappa e^{\frac{2\pi i M c z^2}{c\tau + d}} \varphi(z;\tau)$,
\item $\varphi(z + \lambda \tau + \mu;\tau) = e^{-2\pi i M (\lambda^2\tau + 2\lambda z)} \varphi(z;\tau)$,
\item $\varphi(z;\tau)$ has a Fourier development of the form $\sum_{n, r} c(n,r)q^n e^{2\pi i r z}$ with $c(n,r)=0$ unless $n\geq r^2/4M$.
\end{enumerate}\end{definition}
Jacobi forms with multipliers and of half integral weight, meromorphic Jacobi forms, and weak Jacobi forms are defined similarly with obvious modifications made, and have been studied  in \cite{EZ}, and \cite{ZwegersPhD}, for example.
A  special Jacobi form   used in this paper is Jacobi's theta function, defined by
 \begin{equation}
 \vartheta(z;\tau)= \vartheta(z) :=\sum_{\nu\in\frac12+\Z}e^{\pi i \nu^2\tau+2\pi i\nu\left(z+\frac12\right)}, \label{thedefn}
\end{equation}
where  here and throughout, we may omit the dependence of various functions on the variable $\tau$ when
the context is clear.
This function  is well known to satisfy the following transformation law \cite[(80.31) and (80.8)]{Rad}.
\begin{lemma}\label{THETtrans}
 For $\lambda,\mu \in \mathbb Z$ and
$\gamma=\sm{a}{b}{c}{d} \in \textnormal{SL}_2(\mathbb Z)$, we have that
\begin{eqnarray}
 \vartheta(z+\lambda \tau+\mu;\tau)&=&(-1)^{\lambda+\mu}q^{-\frac{\lambda^2}{2}}e^{-2\pi i\lambda
z}\vartheta(z;\tau),\label{tt1}\\ \vartheta\left(\frac{z}{c\tau+d};
\gamma\tau\right)&=&\psi^3\left(\gamma\right) (c\tau+d)^{\frac12}e^{\frac{\pi icz^2}{c \tau+d}}\vartheta(z;
\tau)\label{tt2}.
\end{eqnarray}
  In particular, we have that
\begin{equation*}
\vartheta\left(\frac{z}{\tau}; -\frac{1}{\tau} \right)= - i \sqrt{-i \tau}  e^{\frac{\pi i z^2}{\tau}}  \vartheta\left(z; \tau \right).
\end{equation*}
\end{lemma}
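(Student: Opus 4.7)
The plan is to establish the two transformation laws of Lemma \ref{THETtrans} independently, both by direct work with the series (\ref{thedefn}).

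For the elliptic transformation (\ref{tt1}), I would substitute $z \mapsto z+\lambda\tau+\mu$ in the defining series, reindex the summation by $\nu \mapsto \nu-\lambda$ (a bijection of $\tfrac12+\Z$ onto itself since $\lambda \in \Z$), and complete the square in the exponent. Expanding $\pi i \nu^2 \tau + 2\pi i \nu(\lambda\tau+\mu+\tfrac12)$ after the reindexing yields the prefactor $q^{-\lambda^2/2} e^{-2\pi i \lambda z}$, while the overall sign $(-1)^{\lambda+\mu}$ emerges from the half-integrality $\nu \in \tfrac12+\Z$: one contribution $(-1)^\mu$ comes from $e^{2\pi i \nu \mu}$, and a further $(-1)^\lambda$ is produced by the $\tfrac12$-shift inside the character $e^{2\pi i \nu(\cdot+1/2)}$ interacting with the reindexing.

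For the modular transformation (\ref{tt2}), since $\textnormal{SL}_2(\Z)$ is generated by $T=\sm{1}{1}{0}{1}$ and $S=\sm{0}{-1}{1}{0}$, it suffices to verify the identity for both generators and then track the cocycle under composition. The $T$-transformation follows immediately from the series. For the $S$-transformation, which is the classical Jacobi inversion formula, I would complete the square $\pi i \tau(\nu + z/\tau)^2 - \pi i z^2/\tau$ in the exponent and apply Poisson summation to the sum over $\nu \in \tfrac12+\Z$; the associated Gaussian calculation delivers the prefactor $-i\sqrt{-i\tau}\,e^{\pi i z^2/\tau}$ stated in the lemma.

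The main obstacle is pinning down the multiplier $\psi^3(\gamma)$ for a general $\gamma$: tracking it through an arbitrary word in $S$ and $T$ via the cocycle relation leads to Dedekind sums. A cleaner route is to differentiate the proposed identity (\ref{tt2}) in $z$ and set $z=0$, using Jacobi's derivative identity $\partial_z\vartheta(z;\tau)\big|_{z=0} = -2\pi\eta(\tau)^3$ (itself a consequence of the Jacobi triple product). The $e^{\pi i c z^2/(c\tau+d)}$ factor contributes trivially at $z=0$, so the identity collapses to a relation between $\eta^3(\gamma\tau)$ and $\eta^3(\tau)$; comparing with Lemma \ref{ETtrans} then forces the $\vartheta$-multiplier to equal exactly $\psi^3$, reducing the question to the already-established multiplier system of $\eta$.
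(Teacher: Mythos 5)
Your proposal is correct, but note that the paper does not actually prove this lemma: it simply cites Rademacher \cite[(80.31) and (80.8)]{Rad}, so there is no internal argument to compare against. Your sketch is the standard derivation and all the pieces check out. For \eqref{tt1}, the reindexing $\nu\mapsto\nu-\lambda$ together with completing the square gives $q^{-\lambda^2/2}e^{-2\pi i\lambda z}$, the factor $(-1)^\mu$ comes from $e^{2\pi i\nu\mu}$ with $\nu\in\tfrac12+\Z$, and the factor $(-1)^\lambda$ from $e^{-2\pi i\lambda(z+\frac12)}$ after the shift — exactly as you describe. For \eqref{tt2}, Poisson summation handles $S$ (yielding precisely the displayed special case), the $T$-case is immediate from $e^{\pi i\nu^2}=e^{\pi i/4}$ for $\nu\in\tfrac12+\Z$, which matches $\psi^3(T)=e^{\pi i/4}$, and the cocycle property of $j(\gamma,\tau)$ propagates the identity to all of $\textnormal{SL}_2(\Z)$ with \emph{some} multiplier $c(\gamma)$ of absolute value $1$ independent of $z$ and $\tau$.

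Your device for identifying $c(\gamma)$ with $\psi^3(\gamma)$ is clean and worth spelling out: once the transformation is known to hold with a constant multiplier, differentiating \eqref{tt2} at $z=0$ kills the Gaussian factor and the chain rule upgrades $(c\tau+d)^{1/2}$ to $(c\tau+d)^{3/2}$, so Jacobi's derivative identity $\vartheta'(0;\tau)=-2\pi\eta(\tau)^3$ (which the paper itself uses in Example 1, and which follows from \eqref{JTP}) reduces the claim to the cube of \eqref{etatrgen}. This neatly sidesteps the Dedekind-sum bookkeeping that a direct induction on words in $S$ and $T$ would require. The only point to make explicit is the order of logic: the generator argument must come first to guarantee that the multiplier is a $z$- and $\tau$-independent constant, since the derivative comparison alone cannot rule out a $\tau$-dependent factor. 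With that ordering, the proof is complete.
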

\noindent The Jacobi theta function is also known to satisfy the well known triple product identity
 \begin{align}\label{JTP}
\vartheta(z;\tau) = -i q^{\frac{1}{8}} \zeta^{-\frac{1}{2}} \prod_{r=1}^\infty (1-q^r)\left(1-\zeta q^{r-1}\right) \left(1-\zeta^{-1}q^r\right),
\end{align} where throughout $\zeta := e^{2\pi i z}$.

Eichler and Zagier have shown
 that holomorphic Jacobi forms have a {\it theta decomposition} in terms of the following
Jacobi theta functions defined for $a\in \mathbb N$ and $b \in \mathbb Z$:
 \begin{align}\label{thetamelldef}
\vartheta_{a, b}(z; \tau) := \sum\limits_{\lambda\in\Z\atop{\lambda\equiv b\pmod{2a}}} e^{\frac{\pi i\lambda^2\tau}{2a}+2\pi
i\lambda z}.
 \end{align}
We summarize results of \cite{EZ} as follows.
 \begin{proposition}[Eichler-Zagier  \cite{EZ}] \label{ezthetadecprop}
Holomorphic Jacobi forms $\varphi(z;\tau)$ of weight $\kappa$ and index $M$ satisfy
$$\varphi(z;\tau) = \sum_{b \pmod {2 M}} h_b(\tau) \vartheta_{M,b}(z;\tau),$$ where $(h_b(\tau))_{b\pmod{2M}}$ is a vector valued modular form of weight $\kappa-\frac12$.
\end{proposition}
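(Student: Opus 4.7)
The plan is to first exploit the elliptic transformation law of $\varphi$ to reorganize its Fourier expansion into the form $\sum_b h_b(\tau)\vartheta_{M,b}(z;\tau)$, and then combine the Jacobi modular transformation of $\varphi$ with the classical vector-valued transformation of the theta vector $\vec\Theta:=(\vartheta_{M,b})_{b\!\!\pmod{2M}}$ to read off the modular transformation of $\vec h:=(h_b)_{b\!\!\pmod{2M}}$.

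First I would expand $\varphi(z;\tau)=\sum_{n,r}c(n,r)q^n e^{2\pi i r z}$ and apply condition (2) of Definition \ref{harmjacdef} with $(\lambda,\mu)=(1,0)$. Matching coefficients of $q^n e^{2\pi i r z}$ on both sides produces the elliptic identity
\[
c(n,r)=c(n+r+M,\,r+2M).
\]
The key observation is that the discriminant $D:=4Mn-r^2$ is preserved under $(n,r)\mapsto(n+r+M,r+2M)$, so $c(n,r)$ depends only on $D$ and on the residue $b:=r\pmod{2M}$. Writing $c(n,r)=C_b(D)$, grouping the Fourier expansion by the class of $r$ modulo $2M$, and pulling the $r$-dependent factor $q^{r^2/(4M)}$ out of the inner sum, the remaining discriminant sum $h_b(\tau):=\sum_{D\equiv-b^2\!\!\pmod{4M}}C_b(D)q^{D/(4M)}$ becomes independent of the representative $r$, while the sum of $q^{r^2/(4M)}e^{2\pi i r z}$ over $r\equiv b\pmod{2M}$ is by definition $\vartheta_{M,b}(z;\tau)$. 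This yields the theta decomposition.

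Next I would invoke the classical transformation of the theta vector: for $\gamma=\sm{a}{b}{c}{d}\in\SL_2(\Z)$,
\[
\vartheta_{M,b}\!\left(\tfrac{z}{c\tau+d};\gamma\tau\right)=(c\tau+d)^{1/2}e^{\frac{2\pi i M c z^2}{c\tau+d}}\sum_{b'\!\!\pmod{2M}}(U_\gamma)_{b,b'}\vartheta_{M,b'}(z;\tau),
\]
where $U_\gamma$ is the explicit unitary matrix produced by Poisson summation (entries proportional to $\tfrac{1}{\sqrt{2M}}e^{-\pi i bb'/M}$ for $\gamma=S$ and diagonal $e^{\pi i b^2/(2M)}$ for $\gamma=T$). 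Substituting this together with the Jacobi transformation of $\varphi$ into the theta decomposition evaluated at $\bigl(z/(c\tau+d),\gamma\tau\bigr)$, the $z$-dependent automorphy factor $e^{2\pi i Mcz^2/(c\tau+d)}$ cancels on both sides. Linear independence of the $\vartheta_{M,b}(\cdot;\tau)$ as functions of $z$ (immediate from the disjoint supports in $r\pmod{2M}$ of their $e^{2\pi i r z}$-Fourier expansions) then strips off the theta part and forces
\[
\vec h(\gamma\tau)=(c\tau+d)^{\kappa-\frac12}\,U_\gamma^{-T}\,\vec h(\tau),
\]
which is the claimed vector-valued modular transformation of weight $\kappa-\tfrac12$. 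Holomorphy of each $h_b$ at the cusp $i\infty$ is inherited from condition (3) of Definition \ref{harmjacdef}, which forces $c(n,r)=0$ unless $D\geq 0$, so $h_b$ has a Fourier expansion in nonnegative powers of $q^{1/(4M)}$; growth at the remaining cusps is propagated by the transformation just derived, using that $\Gamma$ has finite index in $\SL_2(\Z)$.

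The main obstacle is almost entirely bookkeeping rather than deep input; the two structural pivots are (i) the discriminant-invariance of the elliptic relation $c(n,r)=c(n+r+M,r+2M)$, which is what legitimates the two-index parametrization of the Fourier coefficients by $(D,b)$, and (ii) the exact cancellation of the exponential automorphy factor $e^{2\pi i Mcz^2/(c\tau+d)}$ between $\varphi$ and $\vec\Theta$, which is what makes linear independence in $z$ usable to peel off the theta contribution and isolate the clean modular transformation of $\vec h$.
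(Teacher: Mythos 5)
Your argument is correct and is essentially the standard proof of Theorem 5.1 in Eichler--Zagier's book: the paper itself offers no proof of Proposition \ref{ezthetadecprop}, simply citing \cite{EZ}, and your two pivots (invariance of $D=4Mn-r^2$ and $r\bmod 2M$ under the elliptic relation $c(n,r)=c(n+r+M,r+2M)$, then cancellation of the index automorphy factor plus $z$-linear independence of the $\vartheta_{M,b}$ to extract the weight $\kappa-\tfrac12$ vector-valued transformation) are exactly the ones used there. No gaps.
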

\begin{remark}
We note that a similar result is true for congruence subgroups. For ease of exposition we do not state the precise shape here.
\end{remark}
Zwegers mock Jacobi forms don't quite satisfy the elliptic and modular
transformations given in (1) and (2)
in the above definition of holomorphic Jacobi forms, but instead must be \emph{completed} by adding a certain non-holomorphic function in order to satisfy suitable transformation laws.  To describe the simplest case, we define for $z_1, z_2 \in\C\setminus (\Z\tau+\Z)$ and $\tau\in\mathbb{H}$ the function
\[
\mu(z_1, z_2; \tau):=\frac{e^{\pi iz_1}}{\vartheta(z_2; \tau)}\sum_{r\in\Z}
\frac{(-1)^re^{2\pi irz_2}q^{\frac{r^2+r}{2}}}{1-e^{2\pi iz_1}q^r}
\]
and its completion
\[
\widehat{\mu}(z_1, z_2; \tau):=\mu(z_1, z_2)+\frac{i}{2}R(z_1-z_2).
\]
Here the ``nonholomorphic part" is given by
\[
R(z; \tau)=\sum_{r\in\frac12+\Z}\left(\sgn(r)-E\left(\left(r+\frac{\text{Im}(z)}{v}\right)\sqrt{2v}\right)\right)
(-1)^{r-\frac12}q^{-\frac{r^2}{2}}e^{-2\pi i r z}
\]
with
\[
E(z):=2\int_0^z e^{-\pi t^2}dt.
\]
We note that restricting $z_1$ and $z_2$  to torsion points gives a harmonic weak Maass form.
More generally,  the  function $\widehat{\mu}$ satisfies the following transformation laws
\begin{lemma}(Zwegers)\label{muhattranlem}
Assuming the hypothesis as above, we have for $\gamma=\left(\begin{smallmatrix} a & b \\ c & d \end{smallmatrix}\right)\in SL_2(\Z)$ and $r_1,r_2,s_1,s_2 \in \mathbb Z$,
\begin{align*}
\widehat{\mu}\left(\frac{z_1}{c\tau+d}, \frac{z_2}{c\tau+d}; \gamma \tau\right)&=\psi\left(\gamma\right)^{-3}(c\tau+d)^{\frac12}
e^{-\frac{\pi ic(z_1-z_2)^2}{c\tau+d}}\widehat{\mu}(z_1, z_2; \tau), \\
\widehat{\mu}\left(z_1 + r_1\tau + s_1,z_2 + r_2\tau + s_2;\tau\right) &= (-1)^{r_1+s_1+r_2+s_2} e^{\pi i (r_1-r_2)^2 \tau + 2\pi i (r_1-r_2)(z_1-z_2)} \widehat{\mu}(z_1,z_2;\tau),
\end{align*}
where $\psi\left(\gamma\right)$ is as in (\ref{etatrgen}).
\end{lemma}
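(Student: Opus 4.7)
The plan is to follow Zwegers' original strategy and reduce both identities to a small set of generators. For the elliptic transformation it suffices to verify the four cases in which exactly one of $r_1,s_1,r_2,s_2$ equals $1$ and the others vanish; for the modular transformation it suffices to check the generators $T=\sm{1}{1}{0}{1}$ and $S=\sm{0}{-1}{1}{0}$ of $\SL_2(\Z)$.

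For elliptic shifts by $1$ in $z_1$ or $z_2$ the assertion is immediate: reindexing the sum defining $\mu$, and using \eqref{tt1} in the $z_2$ case, yields $\mu(z_1+1,z_2)=\mu(z_1,z_2+1)=-\mu(z_1,z_2)$, while $R$ is anti-periodic in $z$ by direct inspection of its series, so $\widehat{\mu}$ transforms with the correct sign. The shifts by $\tau$ are subtler: reindexing $r\mapsto r\mp 1$ in the series defining $\mu$ produces the expected exponential factor together with an ``error'' coming from the single term that falls outside the summation range after reindexing. The key ingredient is an identity of the form
$$R(z+\tau;\tau)+e^{-\pi i\tau-2\pi i z}R(z;\tau)=2e^{-\pi i\tau-2\pi i z}h(z;\tau),$$
for an explicit meromorphic function $h$, proved by peeling off the relevant boundary term in the series for $R$ and applying the derivative relation $E'(z)=2e^{-\pi z^2}$ that follows from the definition of $E$. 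Since $h$ is precisely the right multiple of the error term produced by $\mu$, the failure of $\mu$ to satisfy the desired elliptic law is cancelled by the $(i/2)R$ summand, yielding the stated transformation for $\widehat{\mu}$.

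The $T$-case of the modular transformation is immediate from the $q$-expansions of $\mu$ and $R$ together with the $T$-case of \eqref{tt2} for $\vartheta$. The $S$-case is the main obstacle. The cleanest route is to realize $R(z;\tau)$ as an Eichler-type integral
$$R(z;\tau)=i\int_{-\bar\tau}^{i\infty}\frac{g(z;w)}{\sqrt{-i(w+\tau)}}\,dw,$$
where $g(z;w)$ is an explicit unary theta function of weight $3/2$ whose modular transformation under $S$ is classical (this identification is verified by showing both sides satisfy the same elliptic periodicities and the same heat-like equation, then comparing a limiting value). Applying that transformation inside the integral and deforming the contour converts the integral at $-1/\tau$ into the integral at $\tau$ plus explicit boundary terms. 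Those boundary contributions cancel precisely against the discrepancy between $\mu(z_1/\tau,z_2/\tau;-1/\tau)$ and its expected $S$-image, which one computes via a Mordell-type integral representation of $\mu$ combined with \eqref{tt2} applied to the $\vartheta(z_2;\tau)$ in the denominator. The multiplier $\psi(\gamma)^{-3}$ arises from the $\psi^{3}$ appearing in the $S$-transformation of $\vartheta$, which sits in the denominator of $\mu$, while the weight $\tfrac12$ factor $(c\tau+d)^{1/2}$ comes from $\sqrt{-i\tau}$ in the same transformation; the quadratic exponential $e^{-\pi ic(z_1-z_2)^2/(c\tau+d)}$ is produced by completing the square in the elliptic variable of $\vartheta$ together with the $e^{\pi i z_1}$ prefactor.
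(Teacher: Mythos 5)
The paper does not prove this lemma at all: it is quoted verbatim from Zwegers's thesis \cite{ZwegersPhD}, so there is no in-paper argument to compare against. Your outline is, in substance, Zwegers's own proof: reduce to generators; cancel the elliptic-shift error of $\mu$ (a single explicit exponential coming from the term that escapes the summation range under $r\mapsto r\mp1$) against the matching error of $\tfrac{i}{2}R$; and handle the $S$-transformation by realizing $R$ as a non-holomorphic Eichler-type integral of a weight-$\tfrac32$ unary theta function, so that the Mordell integral appearing as the $S$-error of $\mu$ cancels against the boundary contribution from rotating the contour. Two small corrections to the details. First, the elliptic-shift identity for $R$ needs only the reindexing $\nu\mapsto\nu-1$ in its defining sum together with the observation that $\sgn(\nu)-\sgn(\nu-1)$ vanishes for all $\nu\in\tfrac12+\Z$ except $\nu=\tfrac12$ (the $E$-terms match exactly after the shift because $\operatorname{Im}(z+\tau)/v=\operatorname{Im}(z)/v+1$); the derivative relation $E'(z)=2e^{-\pi z^2}$ plays no role there --- Zwegers uses it instead to compute $\partial_{\overline{\tau}}\widehat{\mu}$, i.e.\ the shadow. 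Second, Zwegers establishes the Eichler-integral representation of $R$ by a direct term-by-term evaluation of $\int_{-\overline{\tau}}^{i\infty}e^{\pi i\nu^2w}\left(-i(w+\tau)\right)^{-1/2}dw$ in terms of the complementary error function, which is cleaner and more robust than the uniqueness argument you propose (matching periodicities, a heat-type equation, and one limiting value), since the latter still obliges you to rule out nonzero solutions of the homogeneous problem. Neither point is a gap in the strategy; with those details supplied your sketch closes along exactly Zwegers's lines.
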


We further require certain higher index Jacobi forms, but first recall the following ``mock Jacobi forms" of \cite{ZwegersPhD},
defined for $z, w\in \mathbb C$ with $z\neq w$ and $M\in \mathbb N$ by
\begin{align}\label{fudefn} f_w^{(M)}(z;\tau):= \sum_{\alpha \in \mathbb Z} \frac{e^{4\pi iM\alpha z}q^{M\alpha^2}}{1-e^{2\pi i(z-w)}q^\alpha}.\end{align}
The forms $f_w^{(M)}(z;\tau)$ are completed in \cite{ZwegersPhD} as follows:
\begin{align}\label{fmhatdefn}
\widehat{f}^{(M)}_w(z;\tau) := f_w^{(M)}(z;\tau) - \frac{1}{2} \sum_{\ell \pmod{2M}} R_{M,\ell}(w;\tau)
\vartheta_{M,\ell}(z;\tau),\end{align} where
\[
R_{M, \ell}(w; \tau)
:=\sum\limits_{\lambda\in\Z\atop{\lambda\equiv\ell\pmod{2M}}}\left\{\sgn\left(\lambda+\frac12\right)-E\left(\left(\lambda+\frac{2M\textnormal{Im}(w)}{v}\right)\sqrt{\frac{v}{M}}\right)\right\}e^{-\frac{\pi
i\lambda^2\tau}{2M}-2\pi i\lambda w}.
\]
The completed form $\widehat{f}^{(M)}_w(z;\tau)$ transforms like a 2-variable Jacobi form of
weight $1$ and matrix index $\left( \begin{smallmatrix} M&0\\0&-M\end{smallmatrix}\right)$: \begin{lemma}[Zwegers, \cite{ZwegersPhD}] \label{Zwegerstrans}
With notation and hypotheses as above, for all $\gamma=\sm{a}{b}{c}{d}\in\textnormal{SL}_2(\mathbb Z)$ and $\lambda,\mu \in \mathbb Z$, we have that
\begin{align*}
\widehat{f}^{(M)}_{\frac{w}{c\tau+d}} \left(\frac{z}{c\tau+d};\gamma \tau\right)
&= (c\tau+d) e\left(\frac{Mc\left(z^2-w^2\right)}{(c\tau + d)}\right) \widehat{f}^{(M)}_w(z;\tau),\\
\widehat{f}^{(M)}_{w}(z+\lambda \tau + \mu;\tau) &=e^{-2\pi i M(\lambda^2\tau+2\lambda z)}\widehat{f}^{(M)}_{w}(z;\tau), \\
\widehat{f}^{(M)}_{w+\lambda \tau + \mu}(z;\tau) &= e^{2\pi i M(\lambda^2 \tau + 2\lambda w)}\widehat{f}^{(M)}_{w}(z;\tau).
\end{align*}
\end{lemma}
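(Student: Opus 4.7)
The plan is to establish the three transformation laws by reducing each of them to the previously proved transformations of $\widehat{\mu}$ (Lemma \ref{muhattranlem}) and of the vector-valued theta series $(\vartheta_{M,\ell})_{\ell \pmod{2M}}$. The crucial preliminary step is a theta decomposition of $f_w^{(M)}(z;\tau)$ that expresses it as a finite sum of simpler Appell--Lerch pieces of index $1$. Since $f_w^{(M)}(z;\tau)$ has, in the variable $z$, only simple poles along the lattice cosets $z \equiv w \pmod{\Z\tau+\Z}$, gathering residues via partial fractions yields an expansion
\begin{equation*}
f_w^{(M)}(z;\tau) \;=\; \sum_{\ell \pmod{2M}} h_\ell(w;\tau)\,\vartheta_{M,\ell}(z;\tau),
\end{equation*}
where each coefficient $h_\ell(w;\tau)$ is, after an affine change of variables, a $\mu$-function. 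Completing each $\mu$-piece to $\widehat{\mu}$ by adding $\tfrac{i}{2}R(\,\cdot\,)$ and summing over $\ell$ produces precisely the non-holomorphic correction $-\tfrac12 \sum_{\ell} R_{M,\ell}(w;\tau)\vartheta_{M,\ell}(z;\tau)$, recovering the definition of $\widehat{f}^{(M)}_w(z;\tau)$.

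For the elliptic transformation in $z$, direct manipulation of the defining series for $f_w^{(M)}(z+\lambda\tau+\mu;\tau)$, followed by the reindexing $\alpha \mapsto \alpha-\lambda$, produces the factor $e^{-2\pi iM(\lambda^2\tau+2\lambda z)}$. The series $\vartheta_{M,\ell}(z;\tau)$ transforms identically under $z \mapsto z+\lambda\tau+\mu$ with an index shift $\ell \mapsto \ell+2M\lambda$ that is absorbed by the summation over $\ell$, and $R_{M,\ell}(w;\tau)$ is independent of $z$; combining these yields the claimed law in $z$. For the elliptic transformation in $w$, the theta decomposition together with the elliptic law of $\widehat{\mu}$ in its second argument (Lemma \ref{muhattranlem}) produces exactly the factor $e^{2\pi iM(\lambda^2\tau+2\lambda w)}$, the additional exponentials landing consistently on both $h_\ell(w;\tau)$ and $R_{M,\ell}(w;\tau)$ via their defining indefinite theta series.

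The main technical content is the $\textnormal{SL}_2(\Z)$-transformation. Apply Lemma \ref{muhattranlem} to each $\widehat{\mu}$-piece in the completed decomposition, and apply the theta transformation to each $\vartheta_{M,\ell}(z;\tau)$, which induces a unitary action of $\textnormal{SL}_2(\Z)$ on the $2M$-dimensional vector $(\vartheta_{M,\ell})_\ell$. The $\sqrt{c\tau+d}$ coming from $\widehat{\mu}$ combines with the $\sqrt{c\tau+d}$ coming from the theta to produce the required weight-$1$ factor $c\tau+d$; the two exponential factors $e^{-\pi ic(z_1-z_2)^2/(c\tau+d)}$ and $e^{2\pi iMcz^2/(c\tau+d)}$, after substitution of the affine arguments of the $\mu$-pieces, combine into the claimed factor $e^{2\pi iMc(z^2-w^2)/(c\tau+d)}$, while the multiplier systems $\psi(\gamma)^{-3}$ and $\psi(\gamma)^{3}$ cancel.

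The main obstacle is the bookkeeping: one must verify that the Weil-type action of $\textnormal{SL}_2(\Z)$ on $(\vartheta_{M,\ell})_\ell$ and the dual action on $(R_{M,\ell}(w;\tau))_\ell$ are mutually inverse, so that the bilinear correction $\sum_{\ell} R_{M,\ell}(w;\tau)\vartheta_{M,\ell}(z;\tau)$ transforms as a genuine Jacobi form of weight $1$ and matrix index $\sm{M}{0}{0}{-M}$. This duality—essentially the compatibility of the theta decomposition with the completion to $\widehat{\mu}$—is the technical heart of the argument, and is carried out in detail in \cite{ZwegersPhD}.
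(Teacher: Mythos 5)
First, a point of context: the paper offers no proof of this lemma at all --- it is imported verbatim from Zwegers's thesis with a citation --- so there is no in-paper argument to measure yours against, and your sketch must stand on its own. It does not, because its foundational step is false. The claimed theta decomposition $f_w^{(M)}(z;\tau)=\sum_{\ell\pmod{2M}}h_\ell(w;\tau)\,\vartheta_{M,\ell}(z;\tau)$ with coefficients independent of $z$ cannot hold: the right-hand side is entire in $z$, whereas $f_w^{(M)}(z;\tau)$ has simple poles at every $z\in w+\mathbb Z\tau+\mathbb Z$, coming from the vanishing denominators $1-e^{2\pi i(z-w)}q^{\alpha}$. A theta decomposition with $z$-independent coefficients exists only for the holomorphic ``finite part'' of a meromorphic Jacobi form, never for the piece that carries the poles --- and $f_w^{(M)}$ is precisely the universal polar building block (that is its whole purpose in \eqref{phiPdefn}). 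You appear to have conflated the decomposition $\varphi=\varphi^F+\varphi^P$ of Theorem \ref{decompthm1} with a purported decomposition of $f_w^{(M)}$ itself. Since all three transformation laws in your sketch are routed through this identity --- including the $w$-elliptic law, where the uncompleted $f_w^{(M)}$ in fact fails to be elliptic and produces extra theta terms that the $R_{M,\ell}$-completion must absorb --- the argument does not go through as written.

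The salvageable germ is that higher-level Appell sums do decompose into finitely many level-one pieces, but the correct statement expresses $f_w^{(M)}(z;\tau)$ as a finite sum of exponential factors times $\mu$-type functions whose modular variable is the rescaled $2M\tau$, not as $\vartheta_{M,\ell}(z;\tau)$-multiples of functions of $w$ alone; compare \eqref{RtoR} in the paper, which exhibits exactly this rescaling for $R_{M,\ell}$. With that repair a second obstacle surfaces: Lemma \ref{muhattranlem} applied at $2M\tau$ only gives invariance under a congruence subgroup in the original $\tau$, so the full $\textnormal{SL}_2(\mathbb Z)$ statement still requires tracking how the $2M$ level-one components are permuted under the Weil-type action --- which is exactly the content you defer to \cite{ZwegersPhD}. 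In other words, after correcting the decomposition, the ``bookkeeping'' you set aside is not a routine verification but the entire proof. If you want a self-contained argument, the cleaner route is Zwegers's: prove the two elliptic laws by direct reindexing of the defining series (keeping the extra theta terms in the $w$-direction), and prove the modular law on the generators $S$ and $T$ by computing the failure of modularity of $f_w^{(M)}$ and matching it against the known failure of modularity of the $R_{M,\ell}$.
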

We note that restricting $w$ and $z$ to torsion points gives linear combinations of modular forms multiplied by harmonic weak Maass forms.

Finally, we introduce a useful differential operator $\mathbb D_\varepsilon$, which acts on Jacobi forms \cite{BS}:
\begin{align}\label{Doper}\mathbb D_\varepsilon = \mathbb D_\varepsilon^{(m,n)}(\tau) := \delta_\varepsilon - \frac{(m-n)\textnormal{Im}(\varepsilon)}{\textnormal{Im}(\tau)},
\end{align}
 where $\delta_\varepsilon:=\frac{1}{2\pi i}\frac{d}{d\varepsilon}.$
We point out that the operator $\mathbb D_\varepsilon$ preserves the index of the Jacobi form and raises the weight by $1$.

\section{Automorphic properties of Kac-Wakimoto characters}\label{KWMod}
In this section we prove Theorem \ref{Mainthm}, which follows from Theorem \ref{decompth2}.
\subsection{Preliminaries}
Throughout, we assume that $m>n>0$ with $n \equiv m \equiv 0 \pmod 2$.
It is not difficult to see, using (\ref{JTP}) and the same specializations  as mentioned in the
 introduction,
 that we may rewrite the Kac-Wakimoto generating function (\ref{KWgenf}) as a quotient of Jacobi theta
functions and a power of Dedekind's eta function
\begin{equation} \label{chphiformula}
 chF
 =\varphi\left(z + \frac{\tau}{2};\tau\right)(-1)^mi^{-n}\zeta^{\frac{m-n}{2}}q^{\frac{m-n}{6}}
\eta^{n-m}(\tau),
\end{equation}
where as before $q:=e^{2\pi i\tau}, \ \zeta:=e^{2\pi i z}$ and
\begin{align}\label{phidefn}
\varphi(z; \tau):=\frac{\vartheta\left(z+\frac12\right)^m}{\vartheta(z)^n}.
\end{align}
Given (\ref{chphiformula}) and (\ref{etatrgen}), we proceed by investigating the automorphic properties of the Fourier coefficients of $\varphi(z;\tau)$
(as a function of $z$).
To be more precise, using the notation as in \eqref{helldefn}, studying the Fourier coefficients of $\varphi(z;\tau)$ amounts to studying the functions $h_\ell^{(0)}(\tau)$, and studying the Kac-Wakimoto characters $\textnormal{ch}F_\ell$, i.e. the Fourier coefficients of $\textnormal{ch}F$, amounts to studying the functions $h_\ell^{\left(-\frac{\tau}{2}\right)}(\tau)$.  (Note that as in \cite{DMZ} we have normalized the coefficient functions $h_\ell^{(z_0)}(\tau)$ by multiplying by $q^{-\frac{\ell^2}{2(m-n)}}$.)  Moving from $\varphi(z;\tau)$ to $\textnormal{ch}F$ essentially corresponds to a shift in the Jacobi elliptic variable $\left(z\to z+\frac{\tau}{2}\right)$ of the function $\varphi(z;\tau)$.  As in the setting of holomorphic Jacobi forms, such a shift in $z$ will produce new Jacobi forms;  one only needs to adjust $q$-powers and $\zeta$-powers accordingly, and hence the modular properties of the Fourier coefficents $\textnormal{ch}F_\ell$ can be deduced from those of the coefficient functions $h_\ell^{(0)}(\tau)$. (See \S \ref{mockthetadecsec} for more detail.)
 By a direct calculation we may conclude from Lemma \ref{THETtrans} the following transformation law for $\varphi(z;\tau)$.
\begin{lemma}\label{philem} For $\lambda, \mu \in \mathbb Z$ and $\gamma=\sm{a}{b}{c}{d} \in \Gamma_0(2)$, we have that
\begin{align}\label{phiellip}
\varphi(z+\lambda\tau+\mu)&= q^{-\frac{(m-n)\lambda^2}{2}}e^{-2\pi i(m-n)\lambda z}\varphi(z), \\
\label{phitrans} \varphi\left(\frac{z}{c\tau+d}; \gamma\tau\right)&=\chi^\ast
(\gamma)(c\tau+d)^{\frac{m-n}{2}}e^{\frac{\pi icz^2(m-n)}{c\tau+d}}\varphi(z; \tau),
\end{align}
where
\begin{align}\label{chistardef}
\chi^\ast(\gamma):=\psi\left(\gamma\right)^{3(m-n)}(-1)^{\frac{mc}{4}} .
\end{align}
\end{lemma}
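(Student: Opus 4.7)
The plan is to reduce both assertions to the corresponding laws for the Jacobi theta function given in Lemma \ref{THETtrans}, applied separately to the numerator $\vartheta(z+1/2;\tau)^m$ and denominator $\vartheta(z;\tau)^n$ of $\varphi(z;\tau)$.

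For the elliptic transformation \eqref{phiellip}, I would simply substitute $z\mapsto z+\lambda\tau+\mu$ in both factors and apply \eqref{tt1}. The numerator acquires the factor $\bigl[(-1)^{\lambda+\mu}q^{-\lambda^2/2}e^{-2\pi i\lambda(z+1/2)}\bigr]^m$ and the denominator $\bigl[(-1)^{\lambda+\mu}q^{-\lambda^2/2}e^{-2\pi i\lambda z}\bigr]^n$. Because $m$ and $n$ are even, the signs $(-1)^{m(\lambda+\mu)}$, $(-1)^{n(\lambda+\mu)}$ and the extra $(-1)^{m\lambda}$ coming from the shift by $1/2$ all trivialize, and the surviving $q$- and $\zeta$-contributions combine to the asserted $q^{-(m-n)\lambda^2/2}e^{-2\pi i(m-n)\lambda z}$.

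For the modular transformation \eqref{phitrans} with $\gamma=\sm{a}{b}{c}{d}\in\Gamma_0(2)$, the denominator is handled directly by \eqref{tt2}. For the numerator I would exploit the identity $\frac{z}{c\tau+d}+\frac12 = \frac{w}{c\tau+d}$ with $w:=z+\frac{c\tau+d}{2}$, apply \eqref{tt2} to $\vartheta(w/(c\tau+d);\gamma\tau)$, and then, using that $c\in 2\Z$ and $d$ is odd, apply the elliptic law \eqref{tt1} with integer parameters $\lambda=c/2$ and $\mu=(d-1)/2$ to bring $\vartheta(w;\tau)$ back to $\vartheta(z+\tfrac12;\tau)$. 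Expanding $cw^2/(c\tau+d) = cz^2/(c\tau+d) + cz + c(c\tau+d)/4$ and combining with the elliptic factor, all $q$-powers and $e^{\pm\pi icz}$-contributions cancel, leaving
$$\vartheta\bigl(\tfrac{z}{c\tau+d}+\tfrac12;\gamma\tau\bigr) = \psi(\gamma)^3(c\tau+d)^{1/2}e^{\pi icz^2/(c\tau+d)}\cdot\epsilon\cdot\vartheta(z+\tfrac12;\tau),$$
where an elementary bookkeeping gives $\epsilon = i^{(c/2)d}(-1)^{(d-1)/2}$. Raising to the $m$-th power and dividing by the denominator's $n$-th power then produces the cocycle $\psi(\gamma)^{3(m-n)}(c\tau+d)^{(m-n)/2}e^{\pi i(m-n)cz^2/(c\tau+d)}\cdot\epsilon^m$.

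The main obstacle, and indeed the only nontrivial point, is verifying that $\epsilon^m = (-1)^{mc/4}$. Writing $m=2m'$ and $c=2c'$ one finds $\epsilon^m = (-1)^{m'(c'd+d-1)}$, while $(-1)^{mc/4}=(-1)^{m'c'}$; the difference of exponents is $m'(d-1)(c'+1)$, which is even because $d-1$ is even. This is precisely the place where both the congruence condition $c\equiv 0\pmod 2$ and the parity assumption $m,n\in 2\Z$ enter in an essential way, ensuring that the multiplier simplifies to the clean form \eqref{chistardef}.
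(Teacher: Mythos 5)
Your proof is correct and is precisely the ``direct calculation from Lemma \ref{THETtrans}'' that the paper invokes without writing out: the elliptic law applied to numerator and denominator, and for the modular law the shift $\tfrac{z}{c\tau+d}+\tfrac12=\tfrac{w}{c\tau+d}$ with $w=z+\tfrac{c\tau+d}{2}$ followed by \eqref{tt1} with $\lambda=c/2$, $\mu=(d-1)/2$. Your bookkeeping of the root of unity, including the verification that $\epsilon^m=(-1)^{mc/4}$ using $2\mid c$, $2\mid m$, and $d$ odd, checks out.
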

\subsection{Mock theta decompositions}\label{mockthetadecsec}
We now seek a theta-type decomposition for the meromorphic Jacobi form $\varphi$
(see Proposition \ref{ezthetadecprop} for results in the holomorphic setting).  In analogy to their work, Zwegers showed in \cite{ZwegersPhD} that meromorphic Jacobi forms also have a theta-type decomposition, however the components are not modular forms.  Instead, they are shown to be related to mock mock modular forms \cite[Corollary 3.10]{ZwegersPhD}.
 Rather than
proceed according to Zwegers, we take a different approach as first introduced by Dabholkar-Murthy-Zagier \cite{DMZ} in the case
of poles of order $n=1$ and $n=2$.  As explained in \cite{DMZ}, the advantage to their approach for $n=1$ and $n=2$ as opposed
to Zwegers's approach is that  the forms decompose canonically into two pieces, one of which is constructed directly from the
poles, and the other of which is a mock Jacobi form.
Inspired by their treatment, we consider the general Kac-Wakimoto characters, which by way of (\ref{chphiformula}) and Lemma \ref{philem} give rise to Jacobi forms with poles of higher order $n$.
We point out that the situation for more arbitrary $n$ becomes
complicated rather quickly, and extending the approach in \cite{DMZ} from $n=2$ to more arbitrary $n$ is more involved.  In particular, we encounter new automorphic objects which we call almost harmonic Maass forms (see Definition
\ref{nearlyMaass}), that do not appear in \cite{DMZ}.

 As in the treatment in \cite{DMZ} of meromorphic Jacobi forms with poles of order $n=1$ and $n=2$, we let $z_0\in\C$ and define
\begin{align}\label{helldefn}
h_\ell^{(z_0)}(\tau):=q^{-\frac{\ell^2}{2(m-n)}}\int_{z_0}^{z_0+1}\varphi(z; \tau)e(-\ell z)dz.
\end{align}
Here the path of integration is the straight line connecting $z_0$ and $z_0+1$ if there
are no poles on the path of integration.  Otherwise, if there is a pole on the path of integration which is not an endpoint,  we define the path to be the average
of the paths deformed to pass above or below the pole.  If there is a pole at an endpoint, we first remark that the integral defined in  (\ref{helldefn})  depends only on the height of the path of integration, and not on the initial point of the line.  In this case, therefore, we replace the path $[z_0, z_0+1]$ in (\ref{helldefn}) with the path $[z_0-\delta,z_0+1-\delta]$ for some real $\delta>0$ chosen sufficiently small to ensure the endpoints $z_0-\delta$ and $z_0+1-\delta$ are not poles of the integrand.
We note that with this notation, for $0  \leq \ell <m-n$, $ch F_{\ell}$ differs from $h_{\ell}^{(-\frac{\tau}{2})}$ only
by a $q$-power.
In the case of general $\ell$, in moving from $ch F_\ell$ to $h_\ell^{(-\frac{\tau}{2})}$, one picks up further residues which can be made totally explicit, and which yield only a finite number of additional terms in the $q$-expansion.
 Using (\ref{phiellip}), we have that
\begin{equation*}
h_\ell^{(z_0+\tau)}(\tau) = h_{\ell+m-n}^{({z_0})}(\tau),
\end{equation*}
In particular, if we define
\begin{align*}
h_\ell(\tau):=h_\ell^{\left(-\frac{\ell\tau}{m-n}\right)}(\tau),
\end{align*}
we have that
\begin{align}\label{hinvariant}
h_{\ell+m-n}(\tau)=h_\ell(\tau),
\end{align}
where we have used the fact that $n$ is even.
Using the Residue Theorem and the fact that $\varphi$ (as a function of $z$) has poles only in $\Z+ \tau\Z$, we then obtain for $0 \leq \ell <m-n$
$$
h_{\ell}= h_{\ell}^{\left(-\frac{\tau}{2}\right)}.
$$

We next decompose the Jacobi form $\varphi$  into a ``finite part" and a ``polar part" which we define now.    Firstly set
\begin{equation}\label{phiFdefn}
\varphi^F(z; \tau) :=\sum_{\ell\in\Z}h_\ell(\tau)q^{\frac{\ell^2}{2(m-n)}}\zeta^\ell
 =\sum_{\ell\pmod{m-n}}h_\ell(\tau)\vartheta_{\frac{(m-n)}{2}, \ell}\left(z; \tau\right) ,
\end{equation}
where we have used (\ref{hinvariant}).
 The function
$\vartheta_{m, \ell}(z; \tau)$ is as defined in (\ref{thetamelldef}).
  Moreover we  define the ``polar part" of $\varphi$
\begin{equation}\label{phiPdefn}
\varphi^P(z; \tau) :=-\sum_{j=1}^{\frac{n}{2}}\frac{\widetilde{D}_{2j}(\tau)}{(2j-1)!}\delta_\varepsilon^{2j-1} \left[f_\varepsilon^{\left(\frac{m-n}{2}\right)}\left(z;
\tau\right)\right]_{\varepsilon=0},
 \end{equation}
 where the functions $\widetilde{D}_j$ are the Laurent coefficients of   $\varphi(\varepsilon;\tau)$
\begin{equation} \label{quasi}
\varphi(\varepsilon; \tau)=\frac{\widetilde{D}_n(\tau)}{(2\pi i\varepsilon)^n}+\frac{\widetilde{D}_{n-2}(\tau)}{(2\pi
i\varepsilon)^{n-2}}+\ldots+O(1).
\end{equation}
We note that the functions $\widetilde{D}_j$ are quasimodular forms and as such occur as the holomorphic parts of the
  almost holomorphic functions $D_j$. To be more precise, we write
\begin{align}\label{phistarexp}
\varphi^\ast(\varepsilon; \tau)=e^{\frac{\pi(m-n)\varepsilon^2}{2v}}\varphi(\varepsilon; \tau)=\frac{D_n(\tau)}{(2\pi
i\varepsilon)^n}+\frac{D_{n-2}(\tau)}{(2\pi i\varepsilon)^{n-2}}+\ldots+O(1).
\end{align}
The functions   $\widetilde{D}_j$ and the $D_j$ are easily seen to be related  by
\begin{align}\label{Dtotilde}
D_r(\tau)=\sum_{0\leq j\leq \frac{n-r}{2}}(-1)^j\frac{\widetilde{D}_{2j+r}(\tau)}{j!}\left(\frac{m-n}{8\pi v}\right)^j.
\end{align}
Using (\ref{phitrans}) and the fact that
\[
\frac{|c\tau+d|^2}{v}=\left(\frac{(c\tau+d)^2}{v}-2ic(c\tau+d)\right),
\]  we obtain
 \begin{lemma} \label{nearlyhol}
 For $\gamma=\left(\begin{smallmatrix} a & b\\ c & d\end{smallmatrix}\right)\in\Gamma_0(2)$, we have that for $1\leq r\leq n$
\[
D_r\left(\gamma\tau\right)=\chi^\ast\left(\begin{matrix} a & b\\ c &

d\end{matrix}\right)(c\tau+d)^{\frac{m-n}{2}-r}D_r(\tau).
\]
\end{lemma}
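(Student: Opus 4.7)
The plan is to show that the ``completion'' $\varphi^*(\varepsilon;\tau)$ is designed precisely so that the quadratic exponential factor in the transformation law \eqref{phitrans} is absorbed, and then read off the transformation of each $D_r$ by comparing Laurent coefficients.

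First, I would compute how $\varphi^*$ transforms under $\gamma = \sm{a}{b}{c}{d} \in \Gamma_0(2)$. Using $\mathrm{Im}(\gamma\tau) = v/|c\tau+d|^2$ together with the identity
\[
\frac{|c\tau+d|^2}{v} = \frac{(c\tau+d)^2}{v} - 2ic(c\tau+d)
\]
stated in the lemma's setup, the prefactor becomes
\[
e^{\frac{\pi(m-n)\varepsilon^2}{2\,\mathrm{Im}(\gamma\tau)}} = e^{\frac{\pi(m-n)\varepsilon^2(c\tau+d)^2}{2v}} \cdot e^{-\pi ic(m-n)\varepsilon^2(c\tau+d)}.
\]
Now applying \eqref{phitrans} with $z = \varepsilon(c\tau+d)$ so that $z/(c\tau+d) = \varepsilon$, the exponential $e^{\pi ic(m-n)\varepsilon^2(c\tau+d)}$ coming from $e^{\pi icz^2(m-n)/(c\tau+d)}$ cancels exactly against the second factor above. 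After this cancellation one obtains the clean identity
\[
\varphi^*(\varepsilon;\gamma\tau) = \chi^*(\gamma)(c\tau+d)^{\frac{m-n}{2}}\,\varphi^*\!\bigl(\varepsilon(c\tau+d);\tau\bigr),
\]
where the remaining factor $e^{\pi(m-n)\varepsilon^2(c\tau+d)^2/(2v)}$ is recognized as $e^{\pi(m-n)(\varepsilon(c\tau+d))^2/(2v)}$, i.e.\ the completion factor of $\varphi^*$ evaluated at the rescaled variable $\varepsilon(c\tau+d)$.

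Next, I would compare the Laurent expansions of both sides in the variable $\varepsilon$ around $\varepsilon=0$. Using \eqref{phistarexp}, the left-hand side expands as $\sum_r D_r(\gamma\tau)(2\pi i\varepsilon)^{-r}$, while the right-hand side equals
\[
\chi^*(\gamma)(c\tau+d)^{\frac{m-n}{2}}\sum_{r} \frac{D_r(\tau)}{(2\pi i\varepsilon(c\tau+d))^r}
= \chi^*(\gamma)\sum_{r}(c\tau+d)^{\frac{m-n}{2}-r}\frac{D_r(\tau)}{(2\pi i\varepsilon)^r}.
\]
Matching coefficients of $(2\pi i\varepsilon)^{-r}$ yields exactly
\[
D_r(\gamma\tau) = \chi^*(\gamma)(c\tau+d)^{\frac{m-n}{2}-r}D_r(\tau),
\]
as claimed.

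There is no real obstacle in this argument; the key conceptual point is verifying that the correction term $e^{\pi(m-n)\varepsilon^2/(2v)}$ appearing in the definition \eqref{phistarexp} is tailored to precisely cancel the non-modular phase in \eqref{phitrans}. This is the standard mechanism by which a quasi-Jacobi object is ``completed'' to transform modularly, and is the same idea underlying $\widehat{E}_2 = E_2 - 3/(\pi v)$; the decomposition $|c\tau+d|^2/v = (c\tau+d)^2/v - 2ic(c\tau+d)$ exhibits the required cancellation explicitly. The only minor bookkeeping is to confirm that the multiplier $\chi^*(\gamma)$ from \eqref{chistardef} passes through unchanged, which it does since it is independent of $\varepsilon$.
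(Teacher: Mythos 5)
Your proof is correct and is essentially the paper's own argument: the authors prove the lemma in one line by invoking \eqref{phitrans} together with the identity $|c\tau+d|^2/v=(c\tau+d)^2/v-2ic(c\tau+d)$, which is exactly the cancellation you carry out, followed by the same comparison of Laurent coefficients of $\varphi^*$. No gaps; you have simply written out the details the paper leaves implicit.
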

We prove the following decomposition theorem
\begin{theorem}\label{decompthm1}
 For the meromorphic function $\varphi(z;\tau)$ defined by (\ref{phidefn}) satisfying
(\ref{phiellip}) and (\ref{phitrans}), we have the decomposition
\begin{align}\label{decomp1}
 \varphi(z;\tau) = \varphi^F(z;\tau) + \varphi^P(z;\tau),\end{align}
where $\varphi^F(z;\tau)$  and $\varphi^P(z;\tau)$ are defined by (\ref{phiFdefn}) and (\ref{phiPdefn}), respectively.
\end{theorem}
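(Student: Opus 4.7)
The plan is to follow the strategy of Dabholkar-Murthy-Zagier \cite{DMZ}, generalizing it from pole orders $n=1,2$ to arbitrary even $n$. The core idea is that $\varphi^P$ is designed to capture exactly the polar behavior of $\varphi(z;\tau)$ in $z$, so that $\varphi-\varphi^P$ becomes entire in $z$ and satisfies the same elliptic transformation law as $\varphi$. Since an entire function of $z$ obeying that elliptic law must coincide with its theta decomposition, the theorem will reduce to identifying the coefficients of that decomposition with the functions $h_\ell$ given by \eqref{helldefn}.

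First I would verify that $\varphi^P$ satisfies the same elliptic transformation in $z$ as $\varphi$, namely that of index $M:=(m-n)/2$. A direct index shift $\alpha\mapsto\alpha-1$ in the series \eqref{fudefn} defining $f_\varepsilon^{(M)}$ yields $f_\varepsilon^{(M)}(z+\tau;\tau)=q^{-M}e^{-4\pi iMz}f_\varepsilon^{(M)}(z;\tau)$ and $f_\varepsilon^{(M)}(z+1;\tau)=f_\varepsilon^{(M)}(z;\tau)$, in agreement with \eqref{phiellip}. Because $\delta_\varepsilon$ commutes with $z$-translations, the derivatives $\delta_\varepsilon^{2j-1}[f_\varepsilon^{(M)}(z;\tau)]|_{\varepsilon=0}$ and hence $\varphi^P$ inherit the same law.

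Next I would match the principal parts of $\varphi$ and $\varphi^P$ at $z=0$. Only the $\alpha=0$ summand in \eqref{fudefn} is singular at $z=\varepsilon$, contributing the Laurent tail $-1/(2\pi i(z-\varepsilon))+O(1)$. A straightforward computation then gives
\[
\delta_\varepsilon^{2j-1}\!\left[f_\varepsilon^{(M)}(z;\tau)\right]\Big|_{\varepsilon=0}=-\frac{(2j-1)!}{(2\pi iz)^{2j}}+O(1)
\]
near $z=0$. Weighting by $-\widetilde D_{2j}(\tau)/(2j-1)!$ and summing over $j$ reproduces exactly the principal part $\sum_{j=1}^{n/2}\widetilde D_{2j}(\tau)/(2\pi iz)^{2j}$ recorded in \eqref{quasi}. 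Hence $\varphi-\varphi^P$ is holomorphic at $z=0$, and by ellipticity at every point of $\mathbb Z+\tau\mathbb Z$. As an entire function of $z$ satisfying the elliptic law of index $M$, it admits a theta decomposition $\sum_{\ell\pmod{m-n}}\tilde h_\ell(\tau)\vartheta_{M,\ell}(z;\tau)$ for some functions $\tilde h_\ell$.

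Finally I would identify $\tilde h_\ell=h_\ell$. Using the $1$-periodicity of $\varphi-\varphi^P$ in $z$, the coefficient $\tilde h_\ell(\tau)q^{\ell^2/(2(m-n))}$ equals its $\ell$-th Fourier coefficient $\int_{z_0}^{z_0+1}(\varphi-\varphi^P)(z;\tau)e^{-2\pi i\ell z}dz$, which is independent of the height $\textnormal{Im}(z_0)$. Choosing $\textnormal{Im}(z_0)=-\ell\,\textnormal{Im}(\tau)/(m-n)$ as in \eqref{helldefn}, the identification reduces to the vanishing
\[
\int_{z_0}^{z_0+1}\varphi^P(z;\tau)\,e^{-2\pi i\ell z}\,dz=0.
\]
For $\ell\not\in(m-n)\mathbb Z$ this follows by opening $f_\varepsilon^{(M)}$ as a geometric series (convergent on the line since every integer $\alpha$ differs from $\ell/(m-n)$) and checking that the resonance condition $(m-n)\alpha+k=\ell$ is never satisfied in the regime of convergence. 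The remaining case $\ell\in(m-n)\mathbb Z$, in which the contour passes through a pole so the averaging convention in \eqref{helldefn} must be invoked, is the main obstacle: the higher-order poles created by $\delta_\varepsilon^{2j-1}$ produce residue contributions that must be shown to cancel against the symmetric deformation implicit in the averaged path, a combinatorial accounting which genuinely extends the $n=1,2$ analysis of \cite{DMZ}.
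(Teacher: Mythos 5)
Your proposal inverts the paper's logic: the paper computes $\varphi-\varphi^F$ directly via the Residue Theorem (comparing the contours defining $h_\ell^{(z_0)}$ and $h_\ell$, moving all residues to $\varepsilon=0$ by ellipticity, and recognizing the resulting bilateral sum as $-f_\varepsilon^{((m-n)/2)}$ up to a $\vartheta_{\frac{m-n}{2},0}$ term killed by $\delta_\varepsilon$), whereas you show $\varphi-\varphi^P$ is entire and elliptic and then identify its theta-coefficients with $h_\ell$. Your route is legitimate, and its first three steps check out: the index shift $\alpha\mapsto\alpha-1$ does give the elliptic law of index $\frac{m-n}{2}$ for $f_\varepsilon^{((m-n)/2)}$ uniformly in $\varepsilon$; the $\alpha=0$ term $\bigl(1-e^{2\pi i(z-\varepsilon)}\bigr)^{-1}=-\frac{1}{2\pi i(z-\varepsilon)}+O(1)$ is the only singular one near $z=0$, and $\delta_\varepsilon^{2j-1}$ of it produces exactly $-\frac{(2j-1)!}{(2\pi iz)^{2j}}+O(1)$, so the principal parts match those in \eqref{quasi} (note $\varphi$ is even in $z$, so only even-order poles occur, consistent with the sum running over $\widetilde D_{2j}$); and the resonance argument for $\ell\not\equiv 0\pmod{m-n}$ is correct and independent of real $\varepsilon$ near $0$, so it survives the $\varepsilon$-differentiation.

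The genuine gap is the case $\ell\equiv 0\pmod{m-n}$, which you flag but do not resolve; as written the proof is incomplete for one residue class, and the theorem needs all of them. The gap is real but closable, and more cheaply than your description suggests: for $A=-\ell/(m-n)\in\Z$ only the $\alpha=-A$ term of $f_\varepsilon^{((m-n)/2)}$ resonates, and since the integrand there is $1$-periodic in $z$ after multiplication by $e(-\ell z)$, the contour deformed above the pole may be pushed to height $A v+\eta$ and the one below to $Av-\eta$ (the vertical edges cancel by periodicity); expanding the denominator geometrically on each side gives the constant values $1$ and $0$ respectively, so the averaged integral equals $\tfrac12$ \emph{independently of} $\varepsilon$ in a real neighborhood of $0$, and $\delta_\varepsilon^{2j-1}$ annihilates it. No combinatorial cancellation of higher-order residues is needed. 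This is precisely the point the paper's formulation handles automatically: its residue-theorem identity carries the weights $\tfrac12\bigl(\sgn(\alpha-A)-\sgn\bigl(\alpha+\tfrac{\ell}{m-n}\bigr)\bigr)$, which with $\sgn(0)=0$ encode the averaging convention of \eqref{helldefn} and make the on-contour poles a non-issue, at the price of a somewhat longer formal manipulation of bilateral sums. You should either supply the computation above or switch to the paper's direct evaluation of $\varphi-\varphi^F$ for this residue class.
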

\begin{proof}
We  fix a point  ${z_0=A\tau+B\notin \Z+\tau \Z}$.  Since all functions involved in the decomposition theorem  are
meromorphic, it is enough to prove Theorem \ref{decompthm1} for
  $z$ with $\text{Im}(z)=Av$.
  In this case, we have the Fourier expansion
  \[
  \varphi(z; \tau)=\sum_{\ell\in\Z} h_\ell^{(z_0)}(\tau)q^{\frac{\ell^2}{2(m-n)}}\zeta^\ell.
  \]
By definition, we have
\[
\varphi(z; \tau)-\varphi^F(z;
\tau)=\sum_{\ell\in\Z}\Big(h_\ell^{(z_0)}(\tau)-h_\ell(\tau)\Big)q^{\frac{\ell^2}{2(m-n)}}\zeta^\ell.
\]
Using the Residue Theorem, we find that
\[
q^{\frac{\ell^2}{2(m-n)}}\Big(h_\ell^{(z_0)}(\tau)-h_\ell(\tau)\Big)=2\pi i\sum_{\alpha\in\Z}
\frac{\left(\sgn(\alpha-A)-\sgn\left(\alpha+\frac{\ell}{m-n}\right)\right)}{2}\underset{z=\alpha\tau}{\textnormal{Res}}
\Big(\varphi(z; \tau)e(-\ell z)\Big).
\]
We point out that for $\ell \equiv 0 \mod{(m-n)}$, the integral defining $h_\ell(\tau)$ includes poles at endpoints, and we proceed as described in the text following (\ref{helldefn}).
Using the transformation law (\ref{phiellip}) of $\varphi$, we may rewrite
\[
\underset{z=\alpha\tau}{\textnormal{Res}}\Big(\varphi(z; \tau)e(-\ell z)\Big)
=q^{ -\frac{(m-n)\alpha^2}{2}-\ell \alpha}
\underset{\varepsilon=0}{\textnormal{Res}}
\Big(\varphi\left(\varepsilon; \tau\right)e\left(-\varepsilon \left( \ell+(m-n)\alpha \right)
\right)\Big).
\]
Thus
\begin{align*}\nonumber
\varphi(z; \tau)-\varphi^F(z; \tau) =2\pi i\sum_{\alpha\in\Z}&q^{-\frac{(m-n)\alpha^2}{2}}
\frac12\sum_{\ell\in\Z}
\Big(\sgn(\alpha-A)-\sgn\Big(\ell+\alpha(m-n)\Big)\Big)q^{-\ell\alpha}\zeta^\ell\\
&\times\underset{\varepsilon=0}{\textnormal{Res}}\left(\varphi(\varepsilon;
\tau)e\Big(-\varepsilon\Big(\ell+(m-n)\alpha\Big)\Big)\right).
\end{align*}
  We  change $\ell\mapsto\ell-\alpha(m-n)$ and then $\alpha\mapsto -\alpha$, to obtain
\begin{align*}
 \varphi(z; \tau)-\varphi^F(z; \tau) =
-2\pi i\sum_{\alpha\in\Z}&\zeta^{\alpha(m-n)}q^{\frac{(m-n)\alpha^2}{2}} \frac12
\sum_{\ell\in\Z}\Big(\sgn(\alpha+A)+\sgn(\ell)\Big) q^{\ell\alpha}\zeta^{\ell}\nonumber\\& \times
\underset{\varepsilon=0}{\textnormal{Res}}\left(\varphi(\varepsilon; \tau)e(-\varepsilon\ell)\right).
\end{align*}
Using the functions $\widetilde{D}_j$, defined in \eqref{quasi}, it is easily seen that
\begin{equation} \label{computeR}
\underset{\varepsilon=0}{\textnormal{Res}}
\Big(\varphi(\varepsilon; \tau)e(-\ell\varepsilon)\Big)
=\frac{1}{2\pi i}\sum_{j=1}^{\frac{n}{2}}\frac{\widetilde{D}_{2j}(\tau)}{(2j-1)!}\delta_\varepsilon^{2j-1}\Big[e(-\ell\varepsilon)\Big]_{\varepsilon=0}.
\end{equation}
This gives that
\begin{align}\nonumber
\varphi(z; \tau)-\varphi^F(z; \tau)
&=\sum_{j=1}^{\frac{n}{2}}\frac{\widetilde{D}_{2j}(\tau)}{(2j-1)!}\delta_\varepsilon^{2j-1}
 \Bigg[\sum_{\alpha\in\Z} \zeta^{\alpha(m-n)}q^{\frac{(m-n)\alpha^2}{2}}\\\label{phiPcalc3}
&\qquad\frac{-1}{2}\sum_{\ell\in\Z}\Big(\sgn(\alpha+A)+\sgn(\ell)\Big)q^{\ell\alpha}\zeta^\ell
e\big(-\ell\varepsilon\big)\Bigg]_{\varepsilon=0}.
\end{align}
Noting that $\varepsilon$ is real, a direct calculation gives that
\begin{align*}
-\frac{1}{2}\sum_{\ell\in\Z}\Big(\sgn(\alpha+A)+\sgn(\ell)\Big)q^{\ell\alpha}\zeta^\ell
e(-\ell\varepsilon) = -\frac{1}{1-e(z-\varepsilon+\alpha\tau)}+\frac12.
\end{align*}
Thus  the sum on $\alpha$ in (\ref{phiPcalc3}) equals
\begin{equation}\label{alphasum}
-\sum_{\alpha\in\Z}\frac{\zeta^{\alpha(m-n)}q^{\frac{(m-n)\alpha^2}{2}}}{1-e(z-\varepsilon)q^\alpha}+
\frac12 \sum_{\alpha\in\Z}
\zeta^{\alpha(m-n)} q^{\frac{(m-n)\alpha^2}{2}}.
\end{equation}
 By equation \eqref{fudefn}, we find that (\ref{alphasum}) becomes
$$
-f_\varepsilon^{\left(\frac{m-n}{2}\right)}\left(z; \tau\right)+\frac{1}{2}\vartheta_{\frac{m-n}{2}, 0}\left(z; \tau\right).
$$
This finishes the claim by noting that the second summand is independent of $\varepsilon$ and
thus vanishes after differentiating at least once.
\end{proof}

\subsection{Almost harmonic Maass forms}
\noindent
In this section we seek a (nonholomorphic) completion such that the Kac Wakimoto characters transform like automorphic forms.
For this purpose we first rewrite the holomorphic polar part in terms of the nonholomorphic  differential operator $\mathbb D = \mathbb{D}_\varepsilon$ defined in (\ref{Doper}).
\begin{lemma}\label{phiplemma}
We have that
\begin{equation}\label{phiPdefn}
\varphi^P(z; \tau)
 =- \sum\limits_{1\leq j\leq \frac{n}{2}}
 \frac{D_{2j}(\tau)}{(2j-1)!}\mathbb
D_{\varepsilon}^{2j-1}\Bigg[f_\varepsilon^{\left(\frac{m-n}{2}\right)}\left(z; \tau\right)\Bigg]_{\varepsilon=0}.
\end{equation}
\end{lemma}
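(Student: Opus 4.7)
The plan is to derive the identity from a conjugation relation that converts the Jacobi-covariant derivative $\mathbb{D}_\varepsilon$ into the ordinary derivative $\delta_\varepsilon$, together with the relation (\ref{Dtotilde}) between $D_r$ and $\widetilde{D}_r$. Writing $\varepsilon = \varepsilon_1+i\varepsilon_2$, $y := \varepsilon_2 = \textnormal{Im}(\varepsilon)$, and $\alpha := (m-n)/v$, I would first introduce the auxiliary function $U := e^{2\pi\alpha y^2}$, which satisfies $U|_{\varepsilon=0}=1$. A direct computation using $\partial_\varepsilon = \tfrac12(\partial_{\varepsilon_1}-i\partial_{\varepsilon_2})$ yields $\delta_\varepsilon(U) = -\alpha y U$, so the Leibniz rule shows $\mathbb{D}_\varepsilon f = U^{-1}\delta_\varepsilon(Uf)$; iterating, $\mathbb{D}_\varepsilon^{k} = U^{-1}\delta_\varepsilon^{k}\,U$ as operators. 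Evaluating at $\varepsilon=0$ and using $U|_{\varepsilon=0}=1$, this collapses to
\[
\mathbb{D}_\varepsilon^{2j-1}f\big|_{\varepsilon=0} = \delta_\varepsilon^{2j-1}(Uf)\big|_{\varepsilon=0}.
\]

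Next I would expand the right-hand side by the Leibniz rule. Since $U$ depends only on $y$, the Taylor series $U = \sum_{p\ge 0}(2\pi\alpha)^p y^{2p}/p!$ gives $\delta_\varepsilon^{2p+1}(U)|_{\varepsilon=0}=0$ and $\delta_\varepsilon^{2p}(U)|_{\varepsilon=0} = (2p)!(m-n)^p/((8\pi v)^p\,p!)$. This produces the explicit identity
\[
\mathbb{D}_\varepsilon^{2j-1}f\big|_{\varepsilon=0} = \sum_{p=0}^{j-1}\binom{2j-1}{2p}\frac{(2p)!\,(m-n)^p}{(8\pi v)^p\,p!}\,\delta_\varepsilon^{2j-1-2p}f\big|_{\varepsilon=0}.
\]
Substituting this into the right-hand side of the lemma, interchanging the order of summation over $j$ and $p$, and re-indexing by $i:=j-p$, the factorials in the binomial coefficient telescope against $(2j-1)!$, so that the coefficient of $\delta_\varepsilon^{2i-1}f|_{\varepsilon=0}$ reduces to
\[
-\frac{1}{(2i-1)!}\sum_{p=0}^{\tfrac{n}{2}-i}\frac{D_{2i+2p}}{p!}\left(\frac{m-n}{8\pi v}\right)^p.
\]

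To finish, I would invert the relation (\ref{Dtotilde})---which corresponds, in the language of (\ref{phistarexp}), to multiplying $\varphi^\ast$ by $e^{-\pi(m-n)\varepsilon^2/(2v)}$ to recover $\varphi$---to obtain
\[
\widetilde{D}_r = \sum_{p\ge 0}\frac{D_{r+2p}}{p!}\left(\frac{m-n}{8\pi v}\right)^p.
\]
The inner sum above is then exactly $\widetilde{D}_{2i}$, and the whole expression collapses to $-\sum_{i=1}^{n/2}\widetilde{D}_{2i}/(2i-1)!\cdot \delta_\varepsilon^{2i-1}f|_{\varepsilon=0}$, which is precisely the original definition of $\varphi^P$ from (\ref{phiPdefn}). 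The main technical obstacle is executing the combinatorial re-indexing correctly and recognizing the inverse of (\ref{Dtotilde}); the operator conjugation via $U$ is the conceptual step that reduces the lemma to this bookkeeping.
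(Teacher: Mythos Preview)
Your argument is correct. The mechanism underlying both your proof and the paper's is the same---converting between $(\widetilde D_{2j},\delta_\varepsilon)$ and $(D_{2j},\mathbb D_\varepsilon)$ via a Gaussian factor---but the organization differs. The paper rewrites the residue in (\ref{computeR}) as $\textnormal{Res}_{\varepsilon=0}\big(\varphi^\ast(\varepsilon)e^{-\pi(m-n)\varepsilon^2/(2v)}e(-\ell\varepsilon)\big)$, extracts the $D_{2j}$ from the Laurent expansion of $\varphi^\ast$, absorbs the holomorphic factor $e^{-\pi(m-n)\varepsilon^2/(2v)}$ into $\mathbb D_\varepsilon^{2j-1}$ upon evaluation at $\varepsilon=0$, and then appeals to the structure of the proof of Theorem~\ref{decompthm1} to reassemble the sum into $f_\varepsilon^{(\frac{m-n}{2})}$. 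You instead work directly at the level of the two closed formulas for $\varphi^P$: your conjugation identity $\mathbb D_\varepsilon = U^{-1}\delta_\varepsilon U$ with the \emph{non-holomorphic} factor $U=e^{2\pi(m-n)\textnormal{Im}(\varepsilon)^2/v}$ holds as an operator identity for all $\varepsilon$ (the paper's holomorphic factor only agrees with $\mathbb D_\varepsilon$ after setting $\varepsilon=0$, since the two differ by a multiple of $\bar\varepsilon$), and the Leibniz expansion plus the inversion of (\ref{Dtotilde}) then carries the combinatorics through without revisiting the residue argument. Your route is more self-contained and makes the conjugation step transparent; the paper's is terser but relies on the reader re-tracing Theorem~\ref{decompthm1}.
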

\begin{proof}
The   proof is similar to that of Theorem \ref{decompthm1}. We first note that (\ref{computeR}) may also be written as
\begin{align} \nonumber
\underset{\varepsilon=0}{\textnormal{Res}}&
\Big(\varphi^*(\varepsilon; \tau) e^{-\frac{\pi (m-n)\varepsilon^2}{2v}}
e(-\ell\varepsilon)\Big)
=\frac{1}{2\pi i}\sum_{j=1}^{\frac{n}{2}}\frac{D_{2j}(\tau)}{(2j-1)}\delta_\varepsilon^{2j-1}\Big[e^{-\frac{\pi(m-n)\varepsilon^2}{2v}}e(-\ell\varepsilon)\Big]_{\varepsilon=0}
\\
&=\frac{1}{2\pi i}\sum_{j=1}^{\frac{n}{2}}\frac{D_{2j}(\tau)}{(2j-1)!}\delta_\varepsilon^{2j-1}\Big[e^{-\frac{\pi(m-n)\varepsilon^2}{2v}}e(-\ell\varepsilon)\Big]_{\varepsilon=0}
= \frac{1}{2\pi i}
\sum_{j=1}^{\frac{n}{2}}\frac{D_{2j}(\tau)}{(2j-1)!}
\mathbb
D_{\varepsilon}^{2j-1}  \Big[
e(-\ell\varepsilon) \Big]_{\varepsilon=0}. \label{phipresdisplay}
\end{align}
One then obtains Lemma \ref{phiplemma} using (\ref{phipresdisplay}) by proceeding as in the proof of Theorem \ref{decompthm1}.
\end{proof}
Next we will define objects that will turn out to transform like true Jacobi forms and (vector-valued) modular forms, respectively. First, we let
 \begin{multline}\label{phiPhatdefn}
 \widehat{\varphi}^P(z;\tau) :=
 \varphi^P(z;\tau) + \frac12 \sum\limits_{1\leq j\leq
\frac{n}{2}}\frac{D_{2j}(\tau)}{(2j-1)!}\sum_{\ell\pmod{(m-n)}}\mathbb{D}_{\varepsilon}^{2j-1}\left[R_{\frac{m-n}{2}, \ell}(\varepsilon;
\tau)\right]_{\varepsilon=0} \vartheta_{\frac{m-n}{2}, \ell}\left(z; \tau\right)\\
=- \sum\limits_{1\leq j\leq
 \frac{n}{2}}\frac{D_{2j}(\tau)}{(2j-1)!}\mathbb D_{\varepsilon}^{2j-1}\Bigg[\widehat{f}_\varepsilon^{\left(\frac{m-n}{2}\right)}\left(z;
 \tau\right)\Bigg]_{\varepsilon=0},
 \end{multline}
 where the forms $\widehat{f}_u^M(z;\tau)$ are defined in
 (\ref{fmhatdefn}).

  Similarly, we look at the completed version of the mock modular form components, and define
 \begin{align}\label{hellhatdef} \widehat{h}_\ell(\tau) := h_\ell(\tau)-
\frac12 \sum\limits_{1\leq j\leq \frac{n}{2}}\frac{D_{2j}(\tau)}{(2j-1)!}\mathbb{D}_{\varepsilon}^{2j-1}\left[R_{\frac{m-n}{2},
\ell}(\varepsilon; \tau)\right]_{\varepsilon=0}
\end{align}
from which we then build a completed version of the finite part of
the function $\varphi(z;\tau)$:
\begin{align}\label{phiFhatdef} \widehat{\varphi}^F(z;\tau) := \sum_{\ell \pmod{(m-n)}} \widehat{h}_\ell(\tau)
\vartheta_{\frac{m-n}{2},\ell}(z;\tau).
\end{align}
Theorem \ref{Mainthm} follows from Theorem \ref{decompth2} below.
 \begin{theorem}\label{decompth2}
The functions $\widehat{\varphi}^P$ and $\widehat{\varphi}^F$ are (nonholomorphic) Jacobi forms of weight
 $\frac{m-n}{2}$  and index $\frac{m-n}{2}$  on $\Gamma_0(2)$ (with some multiplier).
 The functions $\widehat{h}_{\ell}$ are almost harmonic Maass forms of weight $\frac{m-n-1}{2}$ and depth $\frac{n}{2}$.
  \end{theorem}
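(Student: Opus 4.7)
The plan is to prove the Jacobi form property of $\widehat{\varphi}^P$ first, deduce the corresponding property of $\widehat{\varphi}^F$ from Theorem \ref{decompthm1}, and then read off the statement for $\widehat{h}_\ell$ by inverting the theta decomposition.

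\textbf{Step 1: $\widehat{\varphi}^P$ is a nonholomorphic Jacobi form.} I would analyze the summand
\[
\frac{D_{2j}(\tau)}{(2j-1)!}\,\mathbb D_{\varepsilon}^{2j-1}\bigl[\widehat{f}_\varepsilon^{((m-n)/2)}(z;\tau)\bigr]_{\varepsilon=0}
\]
for each $1\le j\le n/2$. By Lemma \ref{Zwegerstrans} with $M=(m-n)/2$, the completion $\widehat{f}_\varepsilon^{(M)}$ transforms as a (nonholomorphic) Jacobi form of weight $1$ and matrix index $\left(\begin{smallmatrix}M & 0\\0 & -M\end{smallmatrix}\right)$ in the two elliptic variables $(z,\varepsilon)$. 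The operator $\mathbb D_\varepsilon$ preserves the Jacobi index and raises the weight by $1$ (cf.\ the remark after \eqref{Doper}), so $\mathbb D_\varepsilon^{2j-1}\widehat{f}_\varepsilon^{(M)}$ is a nonholomorphic Jacobi form of weight $2j$ and the same matrix index. Evaluating at $\varepsilon=0$ keeps the weight and extracts index $M=(m-n)/2$ in the $z$-variable. By Lemma \ref{nearlyhol}, $D_{2j}$ is an almost holomorphic modular form of weight $\tfrac{m-n}{2}-2j$ for $\Gamma_0(2)$ with multiplier $\chi^\ast$, so the product has total weight $\tfrac{m-n}{2}$, index $\tfrac{m-n}{2}$, and multiplier $\chi^\ast$ (the $\psi^{-3}$ from Lemma \ref{Zwegerstrans} cancels against the $\psi^{3(m-n)}$ in $\chi^\ast$ after bookkeeping since $n$ is even). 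Summing over $j$ preserves the transformation behavior.

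\textbf{Step 2: $\widehat{\varphi}^F$ is a nonholomorphic Jacobi form.} Comparing \eqref{phiPhatdefn}, \eqref{hellhatdef}, and \eqref{phiFhatdef} with the decomposition of Theorem \ref{decompthm1}, the correction terms added to $\varphi^P$ and subtracted from $\varphi^F$ agree, so
\[
\widehat{\varphi}^F(z;\tau)+\widehat{\varphi}^P(z;\tau)=\varphi(z;\tau).
\]
By Lemma \ref{philem}, $\varphi$ is itself a meromorphic Jacobi form of weight $\tfrac{m-n}{2}$, index $\tfrac{m-n}{2}$, multiplier $\chi^\ast$ on $\Gamma_0(2)$. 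Subtracting $\widehat{\varphi}^P$ of the same type yields $\widehat{\varphi}^F$ of the same type, with the poles cancelled by the pole contributions of $\widehat{\varphi}^P$ built from $\widehat{f}_\varepsilon^{(M)}$.

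\textbf{Step 3: Vector-valued modularity of $\widehat{h}_\ell$.} From the definition \eqref{phiFhatdef}, $\widehat{\varphi}^F=\sum_{\ell\pmod{m-n}}\widehat{h}_\ell\,\vartheta_{(m-n)/2,\ell}$. Because the classical theta vector $(\vartheta_{(m-n)/2,\ell})_\ell$ is a vector-valued Jacobi form of weight $\tfrac12$ and the same index $\tfrac{m-n}{2}$, inversion of the theta decomposition (as in Proposition \ref{ezthetadecprop}) forces $(\widehat{h}_\ell)_\ell$ to be a vector-valued form of weight $\tfrac{m-n}{2}-\tfrac12=\tfrac{m-n-1}{2}$ on (an appropriate congruence subgroup of) $\Gamma_0(2)$.

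\textbf{Step 4: Depth $n/2$ structure.} Finally I would verify that the explicit expression in \eqref{hellhatdef} matches Definition \ref{nearlyMaass} with $r=n/2$. The sum runs over $j=1,\ldots,n/2$, yielding $n/2+1$ summands (including $h_\ell$). Identifying the underlying harmonic weak Maass form $g$ with a suitable incarnation of $R_{(m-n)/2,\ell}(\varepsilon;\tau)$ at a torsion point (cf.\ the remark after Lemma \ref{Zwegerstrans}), the operator $\mathbb D_\varepsilon^{2j-1}$ restricted at $\varepsilon=0$ translates into iterations of the Maass raising operator $R_\kappa$ acting in the $\tau$-variable, producing precisely the form $g_i R_{\kappa+2-\nu}^{i-1}(g)$ required by Definition \ref{nearlyMaass}, with the almost holomorphic $g_i$ provided by the $D_{2j}$'s.

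\textbf{Main obstacle.} The routine part is the bookkeeping of weights, indices, and multipliers in Steps 1--3. The real work lies in Step 4: one must verify that the elliptic operator $\mathbb D_\varepsilon=\delta_\varepsilon-\tfrac{(m-n)\operatorname{Im}(\varepsilon)}{v}$, when iterated an odd number of times on the nonholomorphic function $R_{(m-n)/2,\ell}(\varepsilon;\tau)$ and then evaluated at $\varepsilon=0$, yields an expression interpretable as $R_{\kappa}^{2j-2}$ applied to a single underlying harmonic weak Maass form $g$, up to almost holomorphic factors that can be absorbed into the $g_i$. Unraveling this correspondence---essentially the fact that the Heisenberg raising operator on Jacobi forms restricts to the Maass raising operator on modular forms along the zero section---is the crux of the matter.
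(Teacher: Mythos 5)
Your Steps 1--3 follow the same route as the paper: the Jacobi transformation of $\widehat{\varphi}^P$ from Lemma \ref{Zwegerstrans} and the properties of $\mathbb D_\varepsilon$, the identity $\widehat{\varphi}^F+\widehat{\varphi}^P=\varphi$ (which is correct, since the correction terms in \eqref{phiPhatdefn} and \eqref{hellhatdef} cancel), and the inversion of the theta decomposition to get vector-valued modularity of weight $\frac{m-n-1}{2}$ for $\widehat{h}_\ell$. The paper dispatches all of this in a few sentences, exactly as you do.

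The genuine gap is that your Step 4 states the crux rather than proving it. The theorem is not established until one shows that
$\mathbb{D}_\varepsilon^{2j-1}\bigl[R_{\frac{m-n}{2},\ell}(\varepsilon;\tau)\bigr]_{\varepsilon=0}$
equals (up to an explicit constant) $R_{\frac32}^{j-1}(\mathcal{R})$ for the single function $\mathcal{R}(\tau)=\frac{d}{d\varepsilon}\bigl[R_{\frac{m-n}{2},\ell}(\varepsilon;\tau)\bigr]_{\varepsilon=0}$, and that $\mathcal{R}$ really is the nonholomorphic part of a harmonic weak Maass form of weight $\frac32$. Three concrete ingredients are missing from your sketch. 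First, the mechanism that converts $\varepsilon$-derivatives into $\tau$-derivatives: the paper rewrites $R_{\frac{m-n}{2},\ell}$ in terms of Zwegers' one-variable $R$-function and invokes the heat-type equation $\frac{d^2}{d\varepsilon^2}R_{\frac{m-n}{2},\ell}=-4\pi i(m-n)\frac{d}{d\tau}R_{\frac{m-n}{2},\ell}$ from Bringmann--Zwegers; without such an identity the odd iterate of $\mathbb D_\varepsilon$ at $\varepsilon=0$ cannot be expressed in the $\tau$-variable at all. Second, an actual verification that the resulting polynomial in $1/v$, with its explicit Gaussian-derivative coefficients $\alpha_{r,j}$, satisfies the recursion $g_{j+1}=R_{2j-\frac12}(g_j)$; it is not automatic that the particular linear combination produced by $\mathbb D_\varepsilon^{2j-1}$ is an iterated raising operator applied to one fixed seed, and the paper checks this by comparing the two expansions termwise. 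Third, your identification of the harmonic weak Maass form $g$ is imprecise: it is not $R_{\frac{m-n}{2},\ell}$ at a torsion point but rather the $\varepsilon$-derivative $\mathcal{R}$, and one must exhibit a holomorphic $f^+$ with $f^++\mathcal{R}$ annihilated by $\Delta_{\frac32}$; the paper does this by differentiating the $\widehat{\mu}$-based completion \eqref{fepsiloncomplete} in $\varepsilon$ at $0$ and using Lemma \ref{muhattranlem}. Until these three points are carried out, the depth-$\frac{n}{2}$ structure of $\widehat{h}_\ell$ in the sense of Definition \ref{nearlyMaass} remains unproved.
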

 \ \\ \emph{Remark.} We note that the multiplier occurring in Theorem  \ref{decompth2} could be made explicit.  (See the proof of Theorem \ref{decompth2}, and \cite{ZwegersPhD}.)
 \begin{proof}
  By Lemma \ref{Zwegerstrans}, Lemma \ref{philem},  Theorem \ref{decompthm1}, and the properties of $\mathbb{D}_{\varepsilon}$ it is not hard to see that $\widehat{\varphi}^P$ transforms like a Jacobi form of weight
 $\frac{m-n}{2}$   and index $\frac{m-n}{2}$ on $\Gamma_0(2)$. As in the case of classical modular forms, one may conclude that
 $\widehat{h}_l$  transforms like a (vector valued) modular form of weight  $\frac{m-n-1}{2}$ and that
 $\widehat{\varphi}^F$ has the same transformation properties as $\widehat{\varphi}^P$.\\
 We next show that
\begin{equation}\label{rewriteDop}
\mathbb{D}_\varepsilon^{2j-1} \left[ R_{\frac{m-n}{2}, \ell} \left( \varepsilon ; \tau \right) \right]_{\varepsilon = 0} = -\frac{(m-n)^{j-1}}{(2\pi)^j} {R^{j-1}_{\frac{3}{2}}} (\mathcal{R} (\tau)),
\end{equation}
where
$$
\mathcal{R} (\tau) := \frac{d}{d\varepsilon} \left[  R_{\frac{m-n}{2}, \ell} \left( \varepsilon ; \tau \right) \right]_{\varepsilon = 0}.
$$
 To see this, we rewrite
  \begin{align}\nonumber
 R_{\frac{m-n}{2}, \ell}(\varepsilon; \tau)
 &=-ie^{-\frac{\pi i\left(\ell-\frac{m-n}{2}\right)^2\tau}{(m-n)}+2\pi i\left(\frac{m-n}{2}-\ell\right)\varepsilon}
\sum_{\lambda\in\frac12+\Z}
\Bigg\{
\sgn\left(\lambda-\frac12+\frac{\ell}{m-n}+\frac{1}{2(m-n)}\right)\\ \nonumber
&-E
\left(\left(\lambda+\frac{\textnormal{Im}\left((m-n)\varepsilon+\left(\ell-\frac{m-n}{2}\right)\tau-\frac12\right)}{(m-n)v}\right)\sqrt{2(m-n)v}\right)
\Bigg\}\\ \nonumber
&\times e^{-\pi i \left(\lambda - \frac12\right)-\pi i(m-n)\lambda^2\tau-2\pi i\lambda\left((m-n)\varepsilon+\left(\ell-\frac{m-n}{2}\right)\tau-\frac12\right)}\\ \label{RtoR}
&=-ie^{-\frac{\pi i\left(\ell-\frac{m-n}{2}\right)^2\tau}{(m-n)}+2\pi i\left(\frac{m-n}{2}-\ell\right)\varepsilon}
 R\left((m-n)\varepsilon- \left(\frac{m-n}{2}-\ell\right)\tau-\frac12;(m-n)\tau\right),
 \end{align}
 where we used that for $0\leq \ell < m-n$, we have
 \[
 \sgn\left(\lambda-\frac12+\frac{\ell}{m-n}+\frac{1}{2(m-n)}\right)=\sgn(\lambda).
 \]
>From work of the first author and Zwegers \cite{BZ}, we may then conclude that
\begin{equation}
\frac{d^2}{d\varepsilon^2} R_{\frac{m-n}{2}, \ell} \left( \varepsilon ; \tau \right) = -4\pi i (m-n) \frac{d}{d\tau} R_{\frac{m-n}{2}, \ell} \left( \varepsilon ; \tau \right).
\end{equation}
This gives that the left hand side of (\ref{rewriteDop}) equals
$$
\frac1{(2\pi i)^{2j-1}}
 \sum_{r=1}^{j} \binom{2j-1}{2r-1} \frac{d^{2j-2r}}{d\varepsilon^{2j-2r}} \left[ e^{\frac{2\pi (m-n) \text{Im} (\varepsilon)^2}{v}} \right]_{\varepsilon = 0}  (-4\pi i (m-n))^{r-1} \frac{d^{r-1}}{d\tau^{r-1}} \mathcal{R}(\tau).
$$
Moreover, using the series expansion of the exponential function, we see that
$$
 \frac{d^{2j-2r}}{d\varepsilon^{2j-2r}} \left[ e^{\frac{2\pi (m-n) \text{Im} (\varepsilon)^2}{v}} \right]_{\varepsilon = 0} = \left( \frac{-i}{2} \right)^{2j-2r} (2j-2r)! \frac{\left(\frac{2\pi (m-n)}{v}\right)^{j-r}}{(j-r)!}.
$$
Thus (\ref{rewriteDop}) equals
$$
-\frac{(m-n)^{j-1}}{(2\pi)^{j}} \sum_{r=1}^{j} \frac{(2j-1)!}{(2r-1)! (j-r)!} i^{r} 2^{3r-2j-1} v^{-j+r} \frac{d^{r-1}}{d\tau^{r-1}} \mathcal{R}(\tau).
$$
Set for $j\geq2$
$$
g_j (\tau) := \sum_{r=1}^j \alpha_{r,j} v^{-j+r}  \frac{d^{r-1}}{d\tau^{r-1}} \mathcal{R}(\tau),
$$
where $$
\alpha_{r,j}:= \frac{(2j-1)!}{(2r-1)!(j-r)!} i^{r} 2^{3r-2j-1}.
$$
To finish the proof, we first note that it is not difficult to see that
$$
g_2 = R_{\frac32} (\mathcal{R}).
$$
We next compute that
$$
R_{2j-\frac12} \left( g_j \right) = \sum_{r=1}^j \frac{\left( r+j-\frac12 \right)}{v^{j+1-r}}\alpha_{r,j} \frac{d^{r-1}}{d\tau^{r-1}} \mathcal{R}(\tau) + 2i \sum_{r=2}^{j+1} \frac{\alpha_{r-1,j}}{v^{j+1-r}} \frac{d^{r-1}}{d\tau^{r-1}} \mathcal{R}(\tau).
$$
From this, one can deduce that
$$
g_{j+1} = R_{2j-\frac12} \left( g_j \right),
$$
{after a short calculation} by comparing this termwise \big(as an expansion in $\frac1v$\big) with
$$
 \sum_{r=1}^{j+1} \frac{\alpha_{r,j+1}}{v^{j+1-r}}  \frac{d^{r-1}}{d\tau^{r-1}} \mathcal{R}(\tau).
$$
This gives (\ref{rewriteDop}).

To finish the proof of the theorem, it remains to show that there exists a holomorphic  $q$-series $f^+$ such that
$$
f(\tau) := f^+ (\tau) + \mathcal{R} (\tau)
$$
is a harmonic Maass form of weight $\frac32$. For this consider the function
\begin{align}\label{fepsiloncomplete}
f(\varepsilon; \tau) := e^{-\frac{\pi i \left( \ell- \frac{m-n}{2} \right)^2\tau}{(m-n)} +2\pi i \left( \frac{m-n}2 - \ell \right) \varepsilon}
\widehat{\mu} \left((m-n) \varepsilon-\frac12, \left( \frac{m-n}{2} - \ell \right) \tau ; (m-n) \tau \right).
\end{align}
We know by Lemma \ref{muhattranlem} that $f$ transforms like a Jacobi form of weight $\frac12$ and index $-\frac{(m-n)}{2}$ (see Lemma \ref{muhattranlem}).
Its nonholomorphic part is given by
$$
\frac{i}{2} e^{-\frac{\pi i \left( \ell - \frac{m-n}{2} \right)^2 \tau}{m-n} + 2\pi i \left( \frac{m-n}2 - \ell \right) \varepsilon} R\left( (m-n) \varepsilon - \left(\frac{m-n}2 - \ell \right) \tau -\frac12; (m-n)\tau\right).
$$
Using (\ref{RtoR}), differentiating $f\left( \varepsilon ; \tau \right)$ with respect to $\varepsilon$ and then setting $\varepsilon = 0$ yields a nonholomorphic modular form of weight $\frac32$ with nonholomorphic part $\frac12 \mathcal{R} (\tau)$. It is not hard to see that this function is annihilated by $\Delta_\frac32$ giving the claim.
\rm
\end{proof}

\section{Examples}\label{examplesec}

\noindent
Here we give explicit examples pertaining to the cases $n=2$ and $n=4$. We compute the almost harmonic Maass forms associated to $\varphi$.
The forms associated to $chF$
are then easily deduced using (\ref{chphiformula}) and (\ref{phidefn}).  We first note that
\begin{equation}\label{Depsilon}
\begin{split}
\mathbb{D}_\varepsilon\left[R_{\frac{m-n}{2}}(\varepsilon)\right]_{\varepsilon=0}
&=\sum_{\lambda\equiv\ell\pmod{m-n}}e^{-\frac{\pi i\lambda^2\tau}{m-n}}\lambda\left(\sgn\left(\lambda+\frac12\right)-E\left(\lambda\sqrt{\frac{2v}{m-n}}\right)\right)\\
&+\frac{\sqrt{m-n}}{\pi\sqrt{2v}}\vartheta_{\frac{m-n}{2}, \ell}(0; -\overline{\tau}).
\end{split}
\end{equation}

\ \\ \ \\ {\bf Example 1: $n=2$.}\\
First we compute the almost holomorphic modular form $D_2$.
 In this case we have that
\[
D_2(\tau)=\widetilde{D}_2(\tau)=\lim_{\varepsilon\to 0}\frac{(2\pi i\varepsilon)^2}{\vartheta(\varepsilon)^2}\vartheta\left(\varepsilon+\frac12\right)^m
=(2\pi i)^2 \frac{\vartheta\left(\frac12\right)^m}{\vartheta'(0)^2}.
\]
It is not difficult to see that
\begin{align*}
\vartheta'(0) &=-2\pi \eta^3(\tau),\\
\vartheta\left(\frac12\right) &=-2\frac{\eta(2\tau)^2}{\eta(\tau)}.
\end{align*}
Thus
\begin{align}\label{D2expr}
D_2(\tau)=-2^{m}\frac{\eta(2\tau)^{2m}}{\eta(\tau)^{m+6}}.
\end{align}
Using (\ref{hellhatdef}) and (\ref{D2expr}), we obtain
\begin{align*} \widehat{h}_{\ell}(\tau) &= h_{\ell}(\tau) + \frac{1}{2} D_2(\tau) \mathbb D_{\varepsilon} \left[R_{\frac{m-n}{2},\ell}(\varepsilon;\tau)\right]_{\varepsilon=0} = h_{\ell}(\tau) - 2^{m-1}\frac{\eta(2\tau)^{2m}}{\eta(\tau)^{m+6}} \mathbb D_{\varepsilon} \left[R_{\frac{m-n}{2},\ell}(\varepsilon;\tau)\right]_{\varepsilon=0}.
\end{align*}
In order to explicitly describe the almost harmonic Maass form in this case, we compute $$\xi_{\frac{3}{2}}\left(\frac{\widehat{h}_\ell(\tau)}{D_2(\tau)}\right).$$
Using (\ref{Depsilon}) (with $n=2$), a simple calculation shows that this is equal to a constant multiple of $\vartheta_{\frac{m-2}{2},\ell}(0;\tau)$.
Summarizing, we have that $\widehat{h}_\ell(\tau)$ is an almost harmonic Maass form of weight $\frac32$ and depth $1$, and the shadow of the associated harmonic weak Maass form is given (up to a constant) by $\vartheta_{\frac{m-2}{2},\ell}(0;\tau)$.   In the case of depth $1$ of course, this statement is equivalent to the statement that
\[
\widehat{h}_\ell(\tau)\frac{\eta(\tau)^{m+6}}{\eta(2\tau)^{2m}}
\]
is a harmonic weak Maass form of weight $\frac32$ with shadow (up to a constant) given by $\vartheta_{\frac{m-2}{2}, \ell}(0; \tau)$.
\ \\ \ \\
{\bf Example 2:  $n=4$.} \ \\   First we compute the almost holomorphic modular forms $D_2$ and $D_4$.  In this case, in order to simplify calculations, we will find $\widetilde{D}_2$ and $\widetilde{D}_4$.  One can then easily make use of (\ref{Dtotilde}) in order to find $D_2$ and $D_4$.  We have that
\begin{align}\label{phiexpan4}\varphi(\varepsilon) := \frac{\vartheta\left(\varepsilon+\frac12\right)^m}{\vartheta(\varepsilon)^4}
=\frac{\widetilde{D}_4(\tau)}{(2\pi i\varepsilon)^4}+\frac{\widetilde{D}_2(\tau)}{(2\pi i\varepsilon)^2}+O(1).\end{align}  \ \\
Let $\vartheta^*(\varepsilon) := \vartheta(\varepsilon)/\varepsilon$. Using the fact that $\vartheta^*(\varepsilon)$ and $\vartheta\left(\varepsilon + \frac{1}{2}\right)$ are even functions of $\varepsilon$, and using the fact that $\vartheta(\varepsilon)$ is holomorphic with simple zeros at $\varepsilon \in \mathbb Z\tau + \mathbb Z$, we have that
\begin{align*}
\vartheta\left(\varepsilon+\frac12\right)&=\vartheta\left(\frac12\right)+\vartheta^{''}\left(\frac12\right)\frac{\varepsilon^2}{2}+O\Big(\varepsilon^4\Big),\\
\vartheta^\ast(\varepsilon) &=\vartheta^\ast(0)+\vartheta^{\ast ''}(0)\frac{\varepsilon^2}{2}+O\Big(\varepsilon^4\Big).
\end{align*}
Thus we rewrite $\varphi(\varepsilon)$ as
\begin{align*}
\varphi(\varepsilon)
&=\frac{\left(\vartheta\left(\frac12\right)+\vartheta^{''}\left(\frac12\right)\frac{\varepsilon^2}{2}+O\Big(\varepsilon^4\Big)\right)^m}
{\varepsilon^4\left(\vartheta^\ast(0)+\vartheta^{\ast ''}(0)\frac{\varepsilon^2}{2}+O\Big(\varepsilon^4\Big)\right)^4} =\frac{1}{\varepsilon^4}\frac{\vartheta\left(\frac12\right)^m}{\vartheta^\ast(0)^4}\frac{\left(1+\frac{m}{2}\varepsilon^2\frac{\vartheta^{''}\left(\frac12\right)}{\vartheta\left(\frac12\right)}+O\Big(\varepsilon^4\Big)\right)}{\left(1+2\varepsilon^2\frac{\vartheta^{\ast ''}(0)}{\vartheta^\ast(0)}+O\Big(\varepsilon^4\Big)\right)}\\
%&=\frac{1}{\varepsilon^4}\frac{\vartheta\left(\frac12\right)^m}{\vartheta^\ast(0)^4}\left(\left(1+\frac{m}{2}\varepsilon^2\frac{\vartheta^{''}\left(\frac12\right)}{\vartheta\left(\frac12\right)}+O\Big(\varepsilon^4\Big)\right)\left(1-2\varepsilon^2\frac{\vartheta^{\ast ''}(0)}{\vartheta^\ast(0)}+O\Big(\varepsilon^4\Big)\right)\right) \\
&=\frac{1}{\varepsilon^4}\frac{\vartheta\left(\frac12\right)^m}{\vartheta^\ast(0)^4}\left(1+\varepsilon^2\left(\frac{m}{2}\frac{\vartheta^{''}\left(\frac12\right)}{\vartheta\left(\frac12\right)}
-2\frac{\vartheta^{\ast ''}(0)}{\vartheta^\ast(0)}\right)+O\Big(\varepsilon^4\Big)\right).
\end{align*} One can show, after a short calculation, that  \begin{align}\label{tq1}
\frac{1}{(2\pi i)^2}\frac{\vartheta^{''}\left(\frac12\right)}{\vartheta\left(\frac12\right)}
&=-\frac{1}{12}E_2(\tau)+\frac13 E_2(2\tau),\\\label{tq2}
\frac{1}{(2\pi i)^2}\frac{\vartheta^{\ast ''}(0)}{\vartheta^\ast(0)}&=\frac{1}{12}E_2(\tau),
\end{align}
where $E_2$ is the holomorphic part of the almost modular $\widehat{E}_2$, and is given by (\ref{E2holdef}).  Thus, using (\ref{phiexpan4}), (\ref{tq1}), and (\ref{tq2}) we conclude that
\begin{align}\label{D4tilexplicit}
\widetilde{D}_4(\tau)&=2^m\frac{\eta(2\tau)^{2m}}{\eta(\tau)^{m+12}},\\ \label{D2tilexplicit}
\widetilde{D}_2(\tau)&=2^m\frac{\eta(2\tau)^{2m}}{\eta(\tau)^{m+12}}\left(\left(-\frac{m}{24}-\frac16\right)E_2(\tau)+\frac{m}{6}E_2(2\tau) \right).
\end{align}
Using (\ref{Dtotilde}) we find that
\begin{align}D_4(\tau) &= \widetilde{D}_4(\tau) , \label{Dj4explicit}
\\ \label{Dj2explicit} D_2(\tau) &= \widetilde{D}_2(\tau) - \frac{m-4}{8\pi v}\widetilde{D}_4(\tau),
\end{align}
where $\widetilde{D}_4$ and $\widetilde{D}_2$ are given in  (\ref{D4tilexplicit}) and (\ref{D2tilexplicit}), respectively.

We note that $\mathbb{D}_\varepsilon\left[R_{\frac{m-n}{2}}(\varepsilon)\right]_{\varepsilon=0}$ is given as in \eqref{Depsilon}.
A calculation nearly identical to that in the case $n=2$ in Example 1 above shows that $\xi_{\frac{3}{2}}\left(f(\varepsilon;\tau)\right)$ with $f$ defined in \eqref{fepsiloncomplete} is a multiple of  $\vartheta_{\frac{m-4}{2},\ell}(0;\tau)$.
Thus we have that
$\widehat{h}_\ell$ is an almost harmonic Maass form of weight $\frac{m-5}{2}$ and depth $2$ given by
$$\widehat{h}_\ell(\tau) = \sum_{j=1}^2 D_{2j}(\tau)R^{j-1}_{\frac{3}{2}}\left(g(\tau)\right),$$
where $D_4$ and $D_2$ are the almost holomorphic modular forms given explicitly in (\ref{Dj4explicit}) and (\ref{Dj2explicit}), respectively. The form $g(\tau)$ is a harmonic weak Maass form whose shadow is up to a constant given by $\vartheta_{\frac{m-4}{2},\ell}(0;\tau)$.

\section{The asymptotic behavior of $s\ell(m|n)^\wedge$ Kac-Wakimoto characters}\label{AB}
Here we prove Theorem \ref{Asythm}, which generalizes our previous results in \cite{BF}, which pertain only to the case $n=1$.
We proceed by different methods than in \cite{BF}, and begin by defining the following integral for
$\ell\in \mathbb Z, n, m\in\mathbb N$ with $n<m$
and $t\in \mathbb R^+$:
\begin{equation*}\label{Isints}
 I_{\ell}(t) := \int_\R \frac{e^{\pi(n-m)tu^2-2\pi
i{\ell}ut}}{\cosh(\pi u)^n}du.
\end{equation*}
 To find the Taylor expansion of this integral, we require some more notation.
We define for integers $ k \geq 0$:
 \begin{align}\label{E1defint}
\mathcal{E}_{ k, 1}&:= (-2i)^{-k} E_k,
\ \\ \label{E2defint}
 \mathcal{E}_{k, 2}&:=\frac{2i^{-k}}{\pi}B_k\left(\frac12\right),
\end{align}
where the Euler polynomials $E_k(z)$ and Bernoulli polynomials $B_k(z)$ are given by the generating functions
\begin{align*}
\sum_{k=0}^\infty E_k(z) \frac{x^k}{k!} := \frac{2e^{xz}}{e^x + 1}, \qquad
\sum_{k=0}^\infty B_k\left(z\right) \frac{x^k}{k!}:= \frac{xe^{zx}}{e^x - 1},
\end{align*}
and $E_k := E_k(0)$.
 With the initial functions (\ref{E1defint}) and (\ref{E2defint}),
 we define more general  unique functions $\mathcal E_{k, n+2}$ for $n\geq 1$ and $k\geq 0$ by the following recurrence:
\begin{align}\label{recurEs}
\mathcal{E}_{k, n+2}=\frac{n}{(n+1)}\mathcal{E}_{k, n}+\frac{k(k-1)}{\pi^2n(n+1)}\mathcal{E}_{k-2, n},
\end{align}
where we take $\mathcal{E}_{ k,n}:=0$ if $k<0$.  We require the following Lemma, whose proof we defer to the end of the section
\begin{lemma}\label{intlem}
For $\ell \in \mathbb Z$, $t\in\mathbb R^+$, and any $N \in \mathbb{N}_0$, as $t\to 0^+$ we have that
\[
I_\ell(t)=\sum_{r=0}^N a_r(m, n, \ell)\frac{t^r}{r!}+O\Big(t^{N+1}\Big),
\]
where
\[
a_r(m, n, \ell):=(-\pi)^r\sum_{k=0}^r\left(\begin{matrix} r\\ k\end{matrix}\right)(m-n)^k(2i\ell)^{r-k}\mathcal{E}_{k+r,n}.
\]
\end{lemma}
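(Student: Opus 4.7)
The plan is to Taylor expand the inner exponential in $t$, swap the sum with the integral, and recognize the moments of $\sech^n(\pi u)$ as the numbers $\mathcal{E}_{k,n}$.

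First I would write, for any $N\ge 0$,
\[
e^{\pi(n-m)tu^{2}-2\pi i\ell u t}
=\sum_{r=0}^{N}\frac{t^{r}}{r!}\bigl(\pi(n-m)u^{2}-2\pi i\ell u\bigr)^{r}+R_{N}(t,u),
\]
and estimate the remainder via the integral form of Taylor's theorem. Since $\operatorname{Re}\bigl(\pi(n-m)tu^{2}-2\pi i\ell u t\bigr)=\pi(n-m)tu^{2}\le 0$, the standard bound yields
\[
|R_{N}(t,u)|\le \frac{t^{N+1}}{(N+1)!}\bigl(\pi(m-n)u^{2}+2\pi|\ell|\,u\bigr)^{N+1}.
\]
Multiplying by $\sech^{n}(\pi u)$ and integrating over $\R$ produces a bounded quantity (by the rapid decay of $\sech^n$), hence an error of size $O(t^{N+1})$.

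Next I would expand the binomial inside each $t^{r}$-coefficient and interchange sum and integral. Writing $M_{j,n}:=\int_{\R}u^{j}\sech^{n}(\pi u)\,du$, a short rearrangement using $(n-m)^{k}(-2i\ell)^{r-k}=(-1)^{r}(m-n)^{k}(2i\ell)^{r-k}$ gives the coefficient of $t^{r}/r!$ in the desired form, namely
\[
(-\pi)^{r}\sum_{k=0}^{r}\binom{r}{k}(m-n)^{k}(2i\ell)^{r-k}M_{k+r,n}.
\]
Thus it remains only to show $M_{j,n}=\mathcal{E}_{j,n}$ for every $j\ge 0$ and every $n\ge 1$.

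For the base cases I would apply the classical Fourier identities $\int_{\R}\sech(\pi u)e^{2\pi i tu}\,du=\sech(\pi t)$ and $\int_{\R}\sech^{2}(\pi u)e^{2\pi i tu}\,du=\tfrac{2t}{\sinh(\pi t)}$. Comparing Taylor coefficients in $t$ against the generating functions of $E_{k}$ and $B_{k}(1/2)$ (using $\tfrac{\pi t}{\sinh(\pi t)}=\sum_{k}\tfrac{B_{k}(1/2)(2\pi t)^{k}}{k!}$) yields $M_{k,1}=(-2i)^{-k}E_{k}=\mathcal{E}_{k,1}$ and $M_{k,2}=\tfrac{2i^{-k}}{\pi}B_{k}(1/2)=\mathcal{E}_{k,2}$.

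For the induction I would derive the recurrence (\ref{recurEs}) for the moments themselves. The key identity
\[
\tfrac{d}{du}\bigl[\tanh(\pi u)\sech^{n}(\pi u)\bigr]=(n+1)\pi\sech^{n+2}(\pi u)-n\pi\sech^{n}(\pi u)
\]
(from $(\tanh)'=\pi\sech^{2}$ and $\tanh^{2}=1-\sech^{2}$), together with a second integration by parts applied to $\tanh\cdot\sech^{n}=-\tfrac{1}{n\pi}(\sech^{n})'$, expresses $M_{k,n+2}$ in terms of $M_{k,n}$ and $M_{k-2,n}$; all boundary terms vanish by the exponential decay of $\sech^{n}$. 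Matching this to (\ref{recurEs}) finishes the identification $M_{j,n}=\mathcal{E}_{j,n}$ for all $j,n$, and the lemma follows. The main technical obstacle is the bookkeeping in the two successive integrations by parts that produce the recurrence; everything else is routine once the Taylor remainder is tamed by the non-positivity of $\operatorname{Re}$ in the exponent.
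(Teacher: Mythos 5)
Your proposal is correct and takes essentially the same route as the paper's proof: both reduce the Taylor coefficients of $I_\ell(t)$ to the moments $\int_{\R}x^{k}\sech^{n}(\pi x)\,dx$, handle the base cases $n=1,2$ via the Fourier transforms of $\sech$ and $\sech^{2}$ compared against the generating functions of $E_k$ and $B_k(\tfrac12)$, and derive the recurrence in $n$ from the identity $\left(\sech^{n}(\pi x)\right)''=\pi^{2}n^{2}\sech^{n}(\pi x)-\pi^{2}n(n+1)\sech^{n+2}(\pi x)$ followed by a double integration by parts (your $\tanh(\pi u)\sech^{n}(\pi u)$ bookkeeping is the identical computation, and your explicit Taylor-remainder estimate is if anything more careful than the paper's tacit differentiation under the integral sign). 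One caution for your final ``matching to (\ref{recurEs})'' step: the double integration by parts genuinely produces the coefficient $-\frac{k(k-1)}{\pi^{2}n(n+1)}$ on the $(k-2,n)$ term --- for instance $\int_{\R}x^{2}\sech^{3}(\pi x)\,dx=\tfrac18-\tfrac{1}{\pi^{2}}$ while $\int_{\R}x^{2}\sech(\pi x)\,dx=\tfrac14$ and $\int_{\R}\sech(\pi x)\,dx=1$ --- so the plus sign printed in (\ref{recurEs}) is a sign typo and a literal match will fail; your derivation gives the correct recurrence.
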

\ \\ \textbf{Remark.}
We will show in the course of the proof of Lemma \ref{intlem} that the functions given by (\ref{E1d}) and (\ref{E2d}) below explicitly solve the recurrence (\ref{recurEs}).

\begin{proof}[Proof of Theorem \ref{Asythm}]
To prove Theorem \ref{Asythm} we must consider the specialized characters $tr_{L_{m,n}(\Lambda_{(\ell)})}q^{L_0}$ (see (\ref{chtrgenrel}) and \S 4 in \cite{KW2}).
  We let $\tau = it$ and find, using (\ref{etadefn}) and (\ref{chphiformula}), that
  \begin{align} \label{Trphifn}
\textnormal{ch}F \cdot  \prod_{k\geq 1} \left(1-e^{-2\pi kt}\right)
 &=\varphi\left(z +
 \frac{it}{2};it\right)(-1)^mi^{-n}\zeta^{\frac{m-n}{2}}e^{-\frac{\pi t}{3}(m-n) + \frac{\pi t}{12} }\eta^{1+n  -m}( it ) =: \phi(z;t).
\end{align}
Using notation as in $\S \ref{KWMod}$, the expressions given here for the functions $tr_{L_{m,n}(\Lambda_{(\ell)})}q^{L_0}$ depend on the range in which $\zeta$ is taken.  (See the discussion following (\ref{helldefn}).)  Here we are interested in $h_\ell^{\left(-\frac{\tau}{2}\right)}(\tau)$.
By Cauchy's formula, we have that
 \begin{equation} \label{Cauchyformula}
 tr_{L_{m, n}(\Lambda_{(\ell)})}
 q^{L_0}=
\int_{-\frac12}^{\frac12}\phi(u;  t)e^{-2\pi i\ell u}du.
\end{equation}
Note that  no poles of $\phi$ (as a function of $u$) lie   on the chosen path of integration.
To understand the asymptotic behavior of the Kac-Wakimoto characters, we apply modular transformations. Namely, we use Lemma \ref{ETtrans} and Lemma \ref{THETtrans} to obtain
\begin{align}\label{phiinvone}
\phi(z; t ) =\frac{(-1)^{m+n}}{\sqrt{t}}e^{\frac{\pi (n-m)z^2}{t}-\frac{\pi mz}{t}-\frac{\pi ( m-n -1)t}{12}-\frac{\pi m}{4t}}
\frac{\vartheta\left(\frac{z}{it}+\frac12+\frac{1}{2 it }; \frac{ i  }{ t }\right)^m} {\vartheta\left(\frac{z}{ it }+\frac12;
\frac{ i}{ t  }\right)^n}\eta\left(\frac{ i }{ t }\right)^{1+n-m}.
\end{align}
We aim to determine the asymptotic behavior of the right hand side of (\ref{phiinvone}).
First, by using the product expansion (\ref{etadefn}), it is not difficult to see that  as $t\to 0^+$,
we have that
\begin{align}\label{etaitbeh}
\eta\left(\frac{i}{t}\right)=e^{-\frac{\pi}{12 t}}\left(1+O\left(e^{-\frac{2\pi}{t}}\right)\right).
\end{align}
Similarly,  applying  (\ref{JTP}), we  deduce that
\begin{align*}
\vartheta\left(\frac{u}{it}+\frac12+\frac{1}{2it}; \frac{i}{t}\right) &=
-e^{\frac{\pi u}{t}+\frac{\pi}{4t}}\Big(1+O\Big(e^{\frac{2\pi |u|}{t}-\frac{\pi}{t}}\Big)\Big),\\
\vartheta\left(\frac{u}{it}+\frac12; \frac{i}{t}\right) &=-2e^{-\frac{\pi}{4t}}\cosh\left(\frac{\pi
u}{t}\right)
\Big(1+O\Big(e^{\frac{2\pi |u|}{t}-\frac{\pi}{t}}\Big)\Big) .
\end{align*}
Thus, we have that
\begin{align*}
\phi(u; t)
&=
\frac{e^{\frac{\pi(n-m)u^2}{t}-\frac{\pi (m-n-1)t}{12}+{ \frac{\pi (m-1)}{12t}}+\frac{\pi n}{6t}}}{{ \sqrt{t} }2^n\cosh\left(\frac{\pi
u}{t}\right)^n}
 \Big(1+O\Big(e^{\frac{2\pi |u|}{t}-\frac{\pi}{t}}\Big)\Big).
\end{align*}
Inserting this into (\ref{Cauchyformula}) yields \begin{align}\label{trform1}
 {tr}_{L_{m, n}(\Lambda_{(\ell)})}q^{L_0}=\frac{1}{{\sqrt{t}}2^n}e^{  -\frac{\pi (m-n-1) t}{12}+{ \frac{\pi (m-1)}{12t}}+\frac{\pi n}{6t}}
\int_{-\frac12}^{\frac12}\frac{e^{\frac{\pi(n-m)u^2}{t}-2\pi i \ell u}}{\cosh\left(\frac{\pi u}{t}\right)^n}
\Big(1+O\Big(e^{\frac{2\pi |u|}{t}-\frac{\pi}{t}}\Big)\Big)du.
\end{align}
To complete the proof, it thus suffices to understand the asymptotic behavior of
\begin{align}\label{asympint1}
\int_{-\frac12}^{\frac12}\frac{e^{\frac{\pi(n-m)u^2}{t}-2\pi i \ell u}}{\cosh\left(\frac{\pi u}{t}\right)^n}
\Big(1+O\Big(e^{\frac{2\pi |u|}{t}-\frac{\pi}{t}}\Big)\Big)du.
\end{align}
In fact, in the main term we will consider instead the integral in (\ref{asympint1}) over all of $\R$, and first observe that
%for positive integers $m$ and $n$ such that $m-n>0$, and $t\in \mathbb R^+$,
 \begin{align} \label{intes1}
 \int_{|t|\geq \frac{1}{2}}  \frac{e^{\frac{\pi(n-m)u^2}{t} - 2\pi i \ell u}}{ \cosh\left(\frac{\pi u}{t}\right)^n}du
 \ll \int_{\frac12}^\infty \frac{e^{\frac{\pi(n-m)u^2}{t}}}{ \cosh\left(\frac{\pi u}{t}\right)}du \ll
e^{\frac{\pi(n-m)}{4t}}\int_{\frac12}^\infty e^{-\frac{\pi u}{t}}du \ll te^{\frac{\pi(n-m)}{4t}}.
\end{align}
Next, in order to estimate the error term in (\ref{asympint1}), we bound
  \begin{equation}\label{intes2}
\int_{-\frac{1}{2}}^{\frac{1}{2}} \frac{e^{\frac{\pi(n-m)u^2}{t} + \frac{2\pi|u|}{t}}}{\cosh\left(\frac{\pi u}{t}\right)^{n}}du
=\frac{t}{2} \int_0^{\frac{1}{2t}} \frac{e^{\pi(n-m)tu^2 + 2\pi u}\cos(2\pi \ell u t)}{\cosh(\pi u)^{n}}du
\ll t\int_0^{\frac{1}{2t}}e^{\pi(n-m)tu^2+\pi u}du.
\end{equation}
It is easy to see that the integrand achieves a maximum at $u=(2(m-n)t)^{-1},$ bounding (\ref{intes2}) (up to a constant)
by $\exp\big(\pi / (4t(m-n))\big)/2$.
Taking  the extra $\exp(-\pi/t)$ in the error term  of (\ref{asympint1}) into
account, we find that this term only contributes to the overall error.

To finish the proof, we consider the Taylor expansion of
\[
\int_\R \frac{e^{\frac{\pi(n-m)u^2}{t}-2\pi i\ell u}}{\cosh\left(\frac{\pi u}{t}\right)^n}du,
\]
which is easily seen to be equal to $tI_\ell(t)$.
Using Lemma \ref{intlem} with (\ref{trform1}) and the preceding argument gives
\[
{tr}_{L_{m, n}(\Lambda_{(\ell)})}q^{L_0}=e^{\frac{\pi t}{12}(n-m+1)+\frac{\pi}{12 t}(m+2n-1)}\frac{\sqrt{t}}{2^n} \left(\sum_{r=0}^Na_r(m, n,
\ell)\frac{t^r}{ r!}+O\left(t^{N+1}\right)\right)
\]
as claimed in Theorem \ref{Asythm}.
\end{proof}

  The remainder of this section is devoted the proof of Lemma \ref{intlem}.
\begin{proof}[Proof of Lemma \ref{intlem}]
We compute that
\begin{align}\nonumber
 \frac{d^r}{dt^r}  \Big[I_\ell(t)\Big]_{t=0}&=(-\pi)^r\int_\R\frac{\Big((m-n)u^2+2
i\ell u\Big)^r}{\cosh(\pi u)^n}du \\ &=(-\pi)^r\sum_{k=0}^r\left(\begin{matrix} r\\  k\end{matrix}\right)(m-n)^k
(2i \ell )^{r- k}\int_\R\frac{u^{k+r}}{\cosh(\pi u)^n}du.\label{Ideriv}
\end{align}
In light of the integral appearing in (\ref{Ideriv}), we define
\begin{align}\label{etildes}
\widetilde{\mathcal{E}}_{k, n} := \int_{\mathbb R}\frac{x^k}{\cosh(\pi x)^n}dx.
\end{align}
To finish the proof, we claim that
the functions $\widetilde{\mathcal E}_{k,n}$  satisfy the recurrence (\ref{recurEs}), with the initial functions given by (\ref{E1d}) and (\ref{E2d}).
We first establish the following lemma, which shows that $\mathcal{E}_{k,n} = \widetilde{\mathcal{E}}_{k,n}$ for $n=1,2$.
\begin{lemma}\label{eulerlemma}For non-negative integers $k$, we have that
 \begin{align}\label{E1d}
 (-2i)^{-k} E_k  &=\int_\R\frac{x^k}{\cosh(\pi x)}dx,
\ \\ \label{E2d}
\frac{2i^{-k}}{\pi}B_k\left(\frac12\right)&=\int_\R\frac{x^k}{\cosh(\pi x)^2}dx.
\end{align}
\end{lemma}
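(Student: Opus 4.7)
The plan is to establish both identities simultaneously by computing the exponential generating functions
\[
I(s) := \int_{\mathbb R} \frac{e^{sx}}{\cosh(\pi x)}\,dx \qquad\text{and}\qquad J(s) := \int_{\mathbb R} \frac{e^{sx}}{\cosh^2(\pi x)}\,dx,
\]
convergent for $|s|<\pi$ and $|s|<2\pi$ respectively, in closed form, and then matching Taylor coefficients against the standard generating functions
\[
\operatorname{sech}(t) = \sum_{k\geq 0} E_k\frac{t^k}{k!}, \qquad \frac{t}{2\sinh(t/2)} = \sum_{k\geq 0} B_k\!\left(\tfrac12\right)\frac{t^k}{k!}.
\]
Note that interchanging sum and integral is justified for $s$ in the stated window because $x^k/\cosh(\pi x)^n$ is dominated by an integrable majorant when we expand $e^{sx}$.

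\textbf{Evaluation of $I(s)$ and $J(s)$.} Both generating functions are computed by applying the residue theorem to the corresponding integrand $e^{sz}/\cosh(\pi z)^j$ (for $j=1,2$) on the rectangular contour with vertices $\pm R$ and $\pm R+i$. The vertical sides vanish in the limit $R\to\infty$ in the respective ranges of $s$. For $j=1$ the only enclosed singularity is a simple pole at $z=i/2$ with residue $e^{is/2}/(i\pi)$, and since $\cosh(\pi(x+i)) = -\cosh(\pi x)$, the top edge contributes $+e^{is}I(s)$; this yields $(1+e^{is})I(s)=2e^{is/2}$, i.e.\
\[
I(s) = \sec(s/2) = \operatorname{sech}(is/2).
\]
For $j=2$ the pole at $z=i/2$ is double; writing $\cosh^2(\pi z) = -\sinh^2(\pi(z-i/2))$ and extracting the Laurent coefficient of $(z-i/2)^{-1}$ gives residue $-se^{is/2}/\pi^2$. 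Because $\cosh^2(\pi(x+i))=\cosh^2(\pi x)$, the top edge now contributes $-e^{is}J(s)$, producing $(1-e^{is})J(s) = -2ise^{is/2}/\pi$, i.e.\
\[
J(s) = \frac{s}{\pi\sin(s/2)} = \frac{-is}{\pi\sinh(-is/2)} \cdot \frac{1}{1} = \frac{2}{\pi}\cdot\frac{(-is)}{2\sinh(-is/2)}.
\]

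\textbf{Conclusion and main obstacle.} Expanding $I(s) = \operatorname{sech}(is/2) = \sum_k E_k (is/2)^k/k!$ and matching with $\sum_k s^k \widetilde{\mathcal E}_{k,1}/k!$ yields $\widetilde{\mathcal E}_{k,1} = (i/2)^k E_k = (-2i)^{-k}E_k$, which is \eqref{E1d}. Similarly, plugging $t = -is$ into the Bernoulli generating function gives $J(s) = (2/\pi)\sum_k B_k(1/2)(-is)^k/k!$, and comparing with $\sum_k s^k \widetilde{\mathcal E}_{k,2}/k!$ yields $\widetilde{\mathcal E}_{k,2} = (2/\pi)(-i)^k B_k(1/2) = (2i^{-k}/\pi)B_k(1/2)$, which is \eqref{E2d}. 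The substantive step is the correct identification of residues and boundary contributions for $j=2$; the remainder is bookkeeping of the $i^k$ factors when translating between $\sin/\cos$ and $\sinh/\cosh$. Note also that both sides of \eqref{E1d} and \eqref{E2d} vanish for odd $k$ (integrand odd; $E_k$ and $B_k(1/2)$ vanish), so the identities carry no sign ambiguity.
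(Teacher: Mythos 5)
Your proof is correct and, for \eqref{E2d}, essentially the same as the paper's: the same rectangular contour of height $1$ with the double pole at $i/2$, the same periodicity relation between the two horizontal edges, and the same matching against the generating function $te^{t/2}/(e^{t}-1)=\sum_k B_k\left(\tfrac12\right)t^k/k!$ (your kernel $e^{sx}$ versus the paper's $e^{2\pi izx}$ is only a change of variable). The only differences are that you also derive \eqref{E1d} by the same contour argument, whereas the paper cites it from the tables of Erd\'elyi et al.\ \cite{EMOT}, and that your argument makes explicit the convention $\operatorname{sech}(t)=\sum_k E_k t^k/k!$ for the Euler numbers, which is indeed the normalization under which \eqref{E1d} holds (the paper's literal definition $E_k:=E_k(0)$ would fail already at $k=1$).
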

\begin{proof}[Proof of Lemma \ref{eulerlemma}]
Identity (\ref{E1d}) is known and can be found in \cite{EMOT}, p $\leftarrow$ 42, (18),  for example.  Since we did not find a proof of (\ref{E2d}) we provide one here for the convenience of the reader.
We first claim that for real $z\neq 0$
 \begin{equation} \label{integraliden}
  \int_\mathbb R \frac{e^{2\pi i z x}}{\cosh(\pi x)^2} dx = \frac{4z}{e^{\pi z} -e^{-\pi z}}
  \end{equation}
To see that (\ref{integraliden}) is true, set  $f(z,x):= \displaystyle \frac{e^{2\pi i z x}}{\cosh(\pi x)^2}.$
  Using the   Residue Theorem we obtain, noting that   $x=i/2$ is a double pole of $f(z,x)$  and that
  $f(z, \pm R+it)\to 0$ for $R \to \infty$, that
  \begin{equation} \label{RTheorem}
  \left(\int_{\mathbb R} - \int_{\mathbb R+i}\right) f(z,x) \ dx = 2\pi i {\textnormal{ \ Res}}_{x= \frac{i}{2}} f(z,x)
  = 4 ze^{-\pi z}.
  \end{equation}
   Next we observe that
     \begin{equation} \label{doubleup}
     \int_{\mathbb R + i } f(z,x) \ dx= \int_{\mathbb R} f(z,x+i) \ dx = e^{-2\pi z}\int_{\mathbb R} f(z,x) \ dx.
     \end{equation}
Combining  (\ref{RTheorem}) and (\ref{doubleup}), we obtain (\ref{integraliden}).

Identity (\ref{integraliden}) then implies that
\begin{equation} \label{ps1}
\int_{\mathbb R} \frac{e^{itx}}{\cosh(\pi t)^2} dt =\frac{2xe^{\frac{x}{2}}}{\pi(e^{x} - 1)}
= \frac{2}{\pi} \sum_{j=0}^{\infty} B_j\left(\frac12\right) \frac{x^j}{j!}.
\end{equation}
On the other hand,
\begin{align}
\int_{\mathbb R} \frac{e^{itx}}{\cosh(\pi t)^2}  dt= \sum_{j=0}^{\infty} i^j
 \widetilde{\mathcal{E}}_{j, 2}\frac{x^j}{j!}
 .\label{ps2}
 \end{align}
Comparing power series coefficients in (\ref{ps1}) and (\ref{ps2}), we obtain identity (\ref{E2d}). \end{proof}

Resuming the proof of Lemma \ref{intlem}, we begin with the fact that $\mathcal E_{k,n} = \widetilde{\mathcal{E}}_{k,n},$ for $n=1,2$ as established by Lemma \ref{eulerlemma}. Next, we use the identity
\[
\left(\frac{d}{dx}\right)^2\left(\frac{1}{\cosh(\pi x)^n}\right)=\frac{\pi^2n^2}{\cosh(\pi x)^n}-\frac{\pi^2n(n+1)}{\cosh(\pi
x)^{n+2}},
\]
and proceed by integration by parts.   We find that
\begin{align}\nonumber
\widetilde{\mathcal{E}}_{k, n+2} &=\int_\R\frac{x^k}{\cosh(\pi x)^{n+2}}dx \\ \nonumber
&=\frac{n}{(n+1)}\int_\R\frac{x^k}{\cosh(\pi
x)^n}dx-\frac{1}{\pi^2 n(n+1)}\int_\R x^k\left(\frac{d}{dx}\right)^2\left(\frac{1}{\cosh(\pi x)^n}\right)dx\\
&=\frac{n}{(n+1)}\widetilde{\mathcal{E}}_{k, n}+\frac{k(k-1)}{\pi^2 n(n+1)}\widetilde{\mathcal{E}}_{k-2, n}\label{tilderec}
\end{align}
as claimed.  Considering the Taylor expansion about $t=0$ of $I_\ell(t)$, the proof of Lemma \ref{intlem} now follows from (\ref{Ideriv}), (\ref{etildes}), and (\ref{tilderec}).
\end{proof}

\end{document}